\newtheorem{theorem}{Theorem}[section]
\newtheorem{lemma}[theorem]{Lemma}
\newtheorem{proposition}[theorem]{Proposition}
\newtheorem{corollary}[theorem]{Corollary}
\newtheorem{question}[theorem]{Question}
\theoremstyle{definition}
\newtheorem{definition}[theorem]{Definition}
\newtheorem{example}[theorem]{Example}
\newtheorem{remark}[theorem]{Remark}
\theoremstyle{approach}
\numberwithin{equation}{section}
\begin{document}
\setcounter{page}{1}
\title[On the Arens regularity of Fr\'echet algebras and their biduals]{On the Arens regularity of Fr\'echet algebras and their biduals}
\author[Z. Alimohammadi and A. Rejali]{Zahra Alimohammadi and Ali Rejali}


\subjclass[2010]{Primary 46H05; Secondary 46A04, 43A60.} \keywords{Arens regular, locally convex space, Fr\'echet algebra, weakly almost periodic function}

\begin{abstract}
In this paper, we study the concept of weakly almost periodic functions on Fr\'echet algebras. 
For a Fr\'echet algebra $\mathcal{A}$, we show that 
$WAP(\mathcal{A})=wap(\mathcal{A})$. We also show that $\mathcal{A}^{**}$ is  Arens regular if and only if both $\mathcal{A}$ and $WAP(\mathcal{A})^*$ are Arens regular. 
Finally, for a sequence of Fr\'echet algebras $(\mathcal{A}_n)$, we prove that the Fr\'echet algebra $\ell^1\text{-}\prod_{n\in\mathbb{N}}\mathcal{A}_n$ is Arens regualr if and only if each $\mathcal{A}_n$ is Arens regular.
\end{abstract}

\maketitle \setcounter{section}{0}

\section{\bf Introduction and preliminaries}\label{S0S}

Let $G$ be a locally compact Abelian group, and let $C_b(G)$ denote the Banach algebra of all bounded continuous complex-valued functions on $G$. Due to Eberlein \cite{Eberlein}, $f$ in $C_b(G)$ is weakly almost periodic if the set $\{s\cdot f: s\in G\overline{\}}^w$ is weakly compact, where $s\cdot f(t)=f(ts)$ for $s,t\in G$. Given a topological space $E$, Grothendieck 
showed that a set $A\subseteq C_b(E)$ is weakly relatively compact if and only if it is bounded and it is impossible to choose sequences $(f_m)$ in $A$ and $(a_n)$ in $E$ such that $\lim_m\lim_nf_m(a_n)$ and $\lim_n\lim_mf_m(a_n)$ both exist and are distinct. Therefore, $f$ is weakly almost periodic if and only if $\lim_m\lim_nf(a_mb_n)$ and $\lim_n\lim_mf(a_mb_n)$ are equal whenever they both exist \cite[Theorem 6]{Grothendiek}. Similarly, Young proved this theorem for $\ell^{\infty}(S)$, where 
$S$ is a semigroup \cite{Young}.
Later, Baker and Rejali \cite{Baker} studied the Arens regularity of $\ell^1(S,w)$, where $w$ is a weight function on the discrete semigroup $S$. 
A survey of results related to weighted group algebras is given by Dales and Lau \cite{Dales}.
Also, Duncan and Hosseiniun provided a survey paper about Arens regularity of Banach algebras \cite{Duncan}. Arikan \cite{Arikan} showed that for a sequence of Banach algebras $(\mathcal{A}_n)$ , $\ell^1\text{-}\prod_{n\in\mathbb{N}}\mathcal{A}_n$ is Arens regular if and only if each $\mathcal{A}_n$ is Arens regular. 
Let $\mathcal{A}^*$ and $\mathcal{A}^{**}$ denote the dual and bidual of a Banach algebra $\mathcal{A}$, respectively. Pym \cite{Pym} showed that $f\in\mathcal{A}^*$ is weakly almost periodic if and only if 
$$
F\square G(f)=F\Diamond G(f)\;\;\;\;\;\;\;(F,G\in\mathcal{A}^{**}).
$$
Gulick \cite{Gulick} studied Arens products on the bidual of locally multiplicatively-convex algebras and especially Fr\'echet algebras. The Arens regularity of Fr\'echet algebras was defined by Zivari-Kazempour \cite{Zivari}. Indeed, a Fr\'echet algebra $\mathcal{A}$ is called Arens regular if the products 
$\square$ and $\Diamond$ coincide on $\mathcal{A}^{**}$.
In this paper, we generalize Pym's criteria to the Fr\'echet case. We show that $\mathcal{A}$ is Arens regular if and only if $WAP(\mathcal{A})=\mathcal{A}^*$, where $WAP(\mathcal{A})$ is the space of all weakly almost periodic functions on $\mathcal{A}$.

Before proceeding to the main results, we provide some basic definitions and frameworks
related to locally convex spaces which will be required throughout the paper. See
\cite{Goldmann}, \cite{Hel2}, \cite{Meise}, and \cite{Sch} for
more information in this field. 

A locally convex space $E$ is a topological vector space in which each point has a neighborhood basis of convex sets. Throughout the paper, all locally convex spaces are
assumed to be Hausdorff. For a locally convex space $E$, a collection $\mathcal{U}$ of zero neighborhoods in $E$ is called a fundamental system of zero neighbourhoods, if for every zero neighborhood $U$ there exists a $V\in\mathcal{U}$ and an $\varepsilon>0$ with $\varepsilon V\subseteq U$. A family $(p_{\alpha})_{\alpha\in
\Lambda}$ of continuous seminorms on $E$ is called a fundamental system of seminorms, if the sets 
$U_{\alpha}:=\{a\in E: p_{\alpha}(a)<1\}$ $(\alpha\in\Lambda)$
form a fundamental system of zero neighborhoods \cite[page 251]{Meise}.
By \cite[Lemma 22.4]{Meise}, every locally convex Hausdorff space $E$
has a fundamental system of seminorms $(p_{\alpha})_{\alpha\in
\Lambda}$ and we denote it by $(E,p_{\alpha})$. 
For locally convex spaces $E$ and
$F$, the bounded operator $T\in\mathcal{L}(E,F)$ is called weakly compact if there exists a zero neighborhood $U$ in $E$ such that $\overline{T(U)}^w$ is weakly compact in F \cite[Page 204]{Kothe2}.

A complete metrizable locally convex
space is called a Fr\'echet space. In fact, a Fr\'echet space $\mathcal{A}$ is a
locally convex space which has a countable fundamental system of
seminorms $(p_{\ell})_{\ell\in\Bbb N}$. 
Following \cite{Meise}, for a Fr\'echet space $\mathcal{A}$, the strong (resp. weak$^*$) topology on $\mathcal{A}^*$ is created by the seminorm system $(q_M)$, defined by
$$
q_M(f):=\sup_{a\in M}|f(a)|\;\;\;\;\;\;\;(f\in\mathcal{A}^*),
$$
where $M\subseteq\mathcal{A}$ is bounded (resp. finite). Similarly, the strong (resp. weak$^*$) topology on $\mathcal{A}^{**}$ is defined by bounded (resp. finite) subsets of $\mathcal{A}^*$. 

A topological algebra ${\mathcal A}$ is an algebra, which is a
topological vector space and the multiplication ${\mathcal
A}\times{\mathcal A}\to {\mathcal A}$ $((a,b)\mapsto
ab)$ is separately continuous. A subset $U$ of ${\mathcal A}$ is idempotent if $U^2\subseteq U$. Moreover, the polar of $U$ is defined as 
$$U^{\circ}=\{f\in\mathcal{A}^*: |f(a)|\leq 1\; \text{for all}\; a\in U\}.$$ 

A complete topological
algebra ${\mathcal A}$ is a Fr\'echet algebra if its topology is produced by a
countable family of increasing submultiplicative seminorms; see \cite{Goldmann}. 
In section \ref{section2}, we show that the dual of $WAP(\mathcal{A})$ is a Fr\'echet algebra under the convolution and evolution products. In section \ref{section3}, we study Grothendieck and Pym theorems for Fr\'echet algebras. In particular, we show that $f\in\mathcal{A}^*$ is weakly almost periodic if and only if there exists a zero neighborhood $U$ in $\mathcal{A}$ such that 
$$\big{\{}a\cdot (f|_{U})^{\beta}: a\in U\overline{\big{\}}}^w$$ 
is weakly compact in $C(\beta U)$, where $\beta U$ is the Stone-\u Cech compactification of $U$.
Furthermore, $f\in WAP(\mathcal{A})$ if and only if for sequences $(a_m)$ and $(b_n)$ in $U$ with distinct elements
$$\lim_m\lim_nf(a_mb_n)=\lim_n\lim_mf(a_mb_n),$$
whenever both limits exist. Moreover, $\mathcal{A}^{**}$ is Arens regular if and only if both $\mathcal{A}$ and $WAP(\mathcal{A})^*$ are Arens regular. Finally, in section \ref{section4}, we show that for a sequence of Fr\'echet algebras $(\mathcal{A}_n,p^n_{\ell})$,
$\ell^1\text{-}\prod_{n\in\mathbb{N}}\mathcal{A}_n$ is a Fr\'echet algebra. Also, $\ell^1\text{-}\prod_{n\in\mathbb{N}}\mathcal{A}_n$ is Arens regular if and only if each $\mathcal{A}_n$ is Arens regular.

\section{\bf Weakly almost periodic functions}\label{section2}

Let $(\mathcal{A},p_{\ell})$ be a Fr\'echet algebra and $f\in\mathcal{A}^*$. We say that $f\in WAP(\mathcal{A})$ if and only if the map $T_f:\mathcal{A}\to\mathcal{A}^*$ defined by $a\mapsto a\cdot f$
is weakly compact. In other words, $f\in WAP(\mathcal{A})$ if and only if there exists a zero neighborhood $U$ in $\mathcal{A}$ such that $\{a\cdot f: a\in U\overline{\}}^w$ is weakly compact in $\mathcal{A}^*$.

In the above definition, we can choose an idempotent neighborhood $U$ of $0$, since the
seminorms $(p_{\ell})_{\ell\in\mathbb{N}}$ are submultiplicative and create a fundamental system; see \cite[page 251]{Meise}. 

\begin{lemma}\label{111llll1}
Let $(\mathcal{A},p_{\ell})$ be a Fr\'echet algebra.
For all $a\in\mathcal{A}$ and $f\in WAP(\mathcal{A})$, $f\cdot a$ and $a\cdot f$ belong to $WAP(\mathcal{A})$.
\end{lemma}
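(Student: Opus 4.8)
The plan is to realise $T_{a\cdot f}$ and $T_{f\cdot a}$ as composites of $T_f$ with continuous multiplication operators, and then to invoke the fact that weak compactness is preserved under composition with a continuous linear operator on either side. Recall that $f\in WAP(\mathcal{A})$ precisely when $T_f:\mathcal{A}\to\mathcal{A}^*$, $b\mapsto b\cdot f$, is weakly compact, and that the bimodule actions are given by $(a\cdot f)(b)=f(ba)$ and $(f\cdot a)(b)=f(ab)$, so that $\mathcal{A}^*$ is an $\mathcal{A}$-bimodule.

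First I would record the two elementary bimodule identities
\[
b\cdot(a\cdot f)=(ba)\cdot f,\qquad b\cdot(f\cdot a)=(b\cdot f)\cdot a\qquad(a,b\in\mathcal{A}),
\]
which follow at once from the definitions. Writing $R_a:\mathcal{A}\to\mathcal{A}$, $b\mapsto ba$, for right multiplication, and $L_a^*:\mathcal{A}^*\to\mathcal{A}^*$ for the adjoint of left multiplication $L_a:c\mapsto ac$ (so that $g\cdot a=L_a^* g$), these identities translate into the operator factorisations
\[
T_{a\cdot f}=T_f\circ R_a,\qquad T_{f\cdot a}=L_a^*\circ T_f.
\]
Here $R_a$ is continuous by the separate continuity of the product, while $L_a^*$ is continuous for the strong topology, since $q_B(L_a^* g)=q_{aB}(g)$ for every bounded $B\subseteq\mathcal{A}$, the set $aB=L_a(B)$ being again bounded.

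It then remains to establish the ideal-type property: if $T:E\to F$ is weakly compact and $S$ is a continuous linear operator, then both $T\circ S$ and $S\circ T$ are weakly compact. For $T_{a\cdot f}=T_f\circ R_a$, I would choose a zero neighbourhood $V$ with $\overline{T_f(V)}^w$ weakly compact and put $W:=R_a^{-1}(V)$; this is a zero neighbourhood, and $\overline{(T_f\circ R_a)(W)}^w$ is a weakly closed subset of the weakly compact set $\overline{T_f(V)}^w$, hence weakly compact. For $T_{f\cdot a}=L_a^*\circ T_f$, the same $V$ works: since continuous linear maps are weak-to-weak continuous, $L_a^*\!\big(\overline{T_f(V)}^w\big)$ is weakly compact, and $\overline{(L_a^*\circ T_f)(V)}^w$ is contained in it. This yields $a\cdot f,\ f\cdot a\in WAP(\mathcal{A})$.

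The only genuinely technical point I anticipate is verifying this ideal property in the locally convex (rather than Banach) setting, where \emph{weakly compact} is defined through the existence of a zero neighbourhood with weakly compact closed image rather than through the closed unit ball; the argument rests on the facts that continuous linear maps are weak-to-weak continuous and that weakly closed subsets of weakly compact sets remain weakly compact. Everything else reduces to the bookkeeping of the two bimodule identities above.
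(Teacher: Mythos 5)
Your proof is correct. The underlying idea --- realising the orbit map of $a\cdot f$ (resp.\ $f\cdot a$) as $T_f\circ R_a$ (resp.\ $L_a^{*}\circ T_f$) and using that weak compactness survives composition with continuous linear maps --- is the same one the paper relies on, but your bookkeeping differs in two ways, both to your advantage. For $a\cdot f$ the paper first fixes an \emph{idempotent} zero neighbourhood $U$, notes that $\{b\cdot(a\cdot f):b\in U\}\subseteq\{c\cdot f:c\in U\}$ when $a\in U$, and then reaches a general $a$ by rescaling it into $U$ via $\varepsilon a/(p_{\ell}(a)+1)$ and using that $WAP(\mathcal{A})$ is closed under scalar multiples; your choice $W=R_a^{-1}(V)$ handles arbitrary $a$ in one step and needs no idempotency. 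For $f\cdot a$ the paper merely writes ``similarly,'' but the two cases are not symmetric: the orbit of $f\cdot a$ is not a sub-orbit of the orbit of $f$, it is the \emph{image} of that orbit under $g\mapsto g\cdot a$, so one genuinely needs the weak-to-weak continuity of $L_a^{*}$ (equivalently its strong continuity, which you verify via $q_B(L_a^{*}g)=q_{aB}(g)$) together with the fact that weakly compact sets are weakly closed in the Hausdorff weak topology. You supply exactly that argument, so your write-up is, if anything, more complete than the paper's at the one point where its proof is thinnest.
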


\begin{proof}
Let $f\in WAP(\mathcal{A})$ and $U$ be an idempotent neighborhood of $0$ such that the set $\{a\cdot f: a\in U\overline{\}}^w$ is weakly compact in $\mathcal{A}^*$. Clearly, $a\cdot f\in WAP(\mathcal{A})$, for every $a\in U$. By using \cite[page 251]{Meise}, there are $\ell\in\mathbb{N}$ and $\varepsilon>0$ such that $\{a\in\mathcal{A}: p_{\ell}(a)<\varepsilon\}\subseteq U$. Thus, for all $a\in\mathcal{A}$ we have 
$\frac{\varepsilon a}{p_{\ell}(a)+1}\in U$ and consequently $\frac{\varepsilon a}{p_{\ell}(a)+1}\cdot f\in WAP(\mathcal{A})$. Therefore, the set 
$\{b\cdot (a\cdot f):\;b\in U\overline{\}}^w$ is weakly compact in $\mathcal{A}^*$ and $a\cdot f\in WAP(\mathcal{A})$. Similarly, 
$f\cdot a\in WAP(\mathcal{A})$, for every $a\in\mathcal{A}$.
\end{proof}

Similar to the definition of Arens products on $\mathcal{A}^{**}$, we can define the convolution and evolution products on $WAP(\mathcal{A})^*$. 
Indeed, for $f\in WAP(\mathcal{A})$ and $F\in WAP(\mathcal{A})^*$ define
$$
F\cdot f(a):=F(f\cdot a)\;\;\text{and}\;\; f\cdot F(a):=F(a\cdot f)\;\;\;\;\;\;(a\in \mathcal{A}).
$$
Also, for $F,G\in WAP(\mathcal{A})^*$, define
$$F\ast G(f):=F(G\cdot f)\;\;\text{and}\;\; F\circ G(f):=G(f\cdot F)\;\;\;\;\;\;(f\in WAP(\mathcal{A})).$$
In the following lemma we show that $F\cdot f$ and $f\cdot F$ belong to $WAP(\mathcal{A})$, and $F\ast G$ and $F\circ G$ belong to $WAP(\mathcal{A})^*$.

\begin{lemma}\label{lGu11}
Let $(\mathcal{A},p_{\ell})$ be a Fr\'echet algebra. The following assertions hold.
\begin{enumerate}
\item[(i)]
For all $f\in WAP(\mathcal{A})$ and $F\in WAP(\mathcal{A})^*$, $F\cdot f$ and $f\cdot F$ belong to $WAP(\mathcal{A})$.
\item[(ii)]
For all $F,G\in WAP(\mathcal{A})^*$, the convolution and evolution products $F\ast G$ and $F\circ G$, respectively, belong to $WAP(\mathcal{A})^*$. 
\end{enumerate}
\end{lemma}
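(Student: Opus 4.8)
The plan is to reduce both assertions to the operator-theoretic description of $WAP(\mathcal{A})$ together with the stability of weakly compact operators under composition with continuous linear maps. For $f\in\mathcal{A}^*$ write $\lambda_f\colon a\mapsto a\cdot f$ and $\rho_f\colon a\mapsto f\cdot a$ for the two orbit maps $\mathcal{A}\to\mathcal{A}^*$; by definition $f\in WAP(\mathcal{A})$ means that $\lambda_f$ is weakly compact. A direct computation from the conventions $(f\cdot a)(x)=f(ax)$ and $(a\cdot f)(x)=f(xa)$ gives the adjoint identities $\lambda_f^*|_{\mathcal{A}}=\rho_f$ and $\rho_f^*|_{\mathcal{A}}=\lambda_f$, together with $f\cdot F=\lambda_f^*(F)$ and $F\cdot f=\rho_f^*(F)$ for $F\in WAP(\mathcal{A})^*$ regarded on $\mathcal{A}^{**}$. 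First I would record a symmetry statement: via a Gantmacher-type theorem in the Fr\'echet setting, $\lambda_f$ is weakly compact if and only if $\lambda_f^*$ is, hence, restricting to $\mathcal{A}$, if and only if $\rho_f$ is; thus membership in $WAP(\mathcal{A})$ may be tested on either orbit map.

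For part (i) I would establish the two factorizations
$$
\lambda_{f\cdot F}=\nu_F\circ\lambda_f,\qquad \rho_{F\cdot f}=\mu_F\circ\rho_f,
$$
where, for $g\in WAP(\mathcal{A})$, the maps $\nu_F(g):=g\cdot F$ and $\mu_F(g):=F\cdot g$ are continuous linear operators from $WAP(\mathcal{A})$ into $\mathcal{A}^*$; Lemma \ref{111llll1} guarantees $a\cdot f,\,f\cdot a\in WAP(\mathcal{A})$, so these compositions land in the domain on which $F$ may be applied. Each identity is checked by evaluating both sides at $a$ and then at $x\in\mathcal{A}$ and using associativity, e.g. $\lambda_{f\cdot F}(a)(x)=(f\cdot F)(xa)=F((xa)\cdot f)=F(x\cdot(a\cdot f))=\nu_F(\lambda_f(a))(x)$. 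Since $\lambda_f$ (resp. $\rho_f$) is weakly compact and $\nu_F$ (resp. $\mu_F$) is continuous linear, the composite is weakly compact: a continuous linear image of a weakly compact set is weakly compact, so if $U$ is a zero neighbourhood with $\overline{\lambda_f(U)}^w$ weakly compact then $\nu_F\big(\overline{\lambda_f(U)}^w\big)$ is weakly compact and contains $\lambda_{f\cdot F}(U)$ (here one uses that $WAP(\mathcal{A})$ is weakly closed, so the weakly compact set stays inside it). This shows $\lambda_{f\cdot F}$ is weakly compact, i.e. $f\cdot F\in WAP(\mathcal{A})$, while the symmetry statement converts the weak compactness of $\rho_{F\cdot f}$ into $F\cdot f\in WAP(\mathcal{A})$.

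Part (ii) then follows quickly. Linearity of $F\ast G$ and $F\circ G$ in the argument $f$ is immediate, and they are well defined because part (i) places $G\cdot f$ and $f\cdot F$ inside $WAP(\mathcal{A})$, where $F$ and $G$ may be evaluated. For continuity I would write $F\ast G$ as the composition of the continuous operator $f\mapsto G\cdot f$ from $WAP(\mathcal{A})$ into $WAP(\mathcal{A})$ (the map $\mu_G$ of part (i), whose range lies in $WAP(\mathcal{A})$) with the continuous functional $F$, and likewise $F\circ G$ as $G$ composed with $f\mapsto f\cdot F$; hence both belong to $WAP(\mathcal{A})^*$.

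The main obstacle is not the algebra but the functional analysis in the Fr\'echet setting: I must verify that the orbit maps $\lambda_f,\rho_f$ and the multiplication operators $\nu_F,\mu_F$ are genuinely continuous for the topologies actually placed on $\mathcal{A}^*$, on $WAP(\mathcal{A})$ and on its dual, and that the Gantmacher-type equivalence and the stability of weak compactness under composition hold with the excerpt's definition of a weakly compact operator (a zero neighbourhood whose image has weakly compact closure). In particular the boundedness of $\nu_F$ and $\mu_F$ encodes the continuity of the Arens module actions of $WAP(\mathcal{A})^*$, and this is exactly where the Fr\'echet structure must be used, namely the submultiplicative seminorms and the idempotent zero neighbourhood supplied by the remark preceding Lemma \ref{111llll1}; once that bookkeeping is in place, the factorization identities carry out the rest of the argument.
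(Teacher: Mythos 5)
Your algebraic skeleton is correct (the adjoint identities $f\cdot F=\lambda_f^*(F)$, $F\cdot f=\rho_f^*(F)$ and the factorizations $\lambda_{f\cdot F}=\nu_F\circ\lambda_f$, $\rho_{F\cdot f}=\mu_F\circ\rho_f$ all check out), and for $f\cdot F$ your argument is essentially the paper's argument in operator language: the paper verifies $a\cdot(f\cdot F)=(a\cdot f)\cdot F$ and shows that $h\mapsto h\cdot F$ is weak--weak continuous by testing against $G\in\mathcal{A}^{**}$ via $G(h\cdot F)=F\Diamond G(h)$. But as written your proof has two genuine gaps. First, you apply $F\in WAP(\mathcal{A})^*$ (through $\nu_F$) to points of the weak closure $\overline{\lambda_f(U)}^w$, and to justify that these points lie in the domain of $F$ you assert parenthetically that $WAP(\mathcal{A})$ is weakly closed in $\mathcal{A}^*$. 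That is not established anywhere (weak closedness of a subspace amounts to closedness, and closedness of $WAP(\mathcal{A})$ in the Fr\'echet setting is itself a nontrivial claim). The paper sidesteps this entirely: it first extends $F$ to $\bar F\in\mathcal{A}^{**}$ by Hahn--Banach (citing Proposition 22.12 of Meise--Vogt), proves $f\cdot\bar F\in WAP(\mathcal{A})$ there, where the module action and the weak continuity of $h\mapsto h\cdot\bar F$ are defined on all of $\mathcal{A}^*$, and only at the end uses Lemma \ref{111llll1} to see that $f\cdot\bar F$ and $f\cdot F$ agree pointwise. You should adopt that extension step; it also disposes of the continuity of $\nu_F$, which for $F$ defined only on $WAP(\mathcal{A})$ you would otherwise have to argue separately.

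Second, your treatment of $F\cdot f$ hinges on a ``Gantmacher-type theorem in the Fr\'echet setting'' asserting that $\lambda_f$ is weakly compact if and only if $\rho_f$ is, with the paper's definition of a weakly compact operator (a zero neighbourhood whose image has weakly compact closure). You need this twice: to pass from $\lambda_f$ to $\rho_f$, and then back from $\rho_{F\cdot f}$ to $\lambda_{F\cdot f}$, since $WAP(\mathcal{A})$ is defined asymmetrically through the left orbit map only and $\lambda_{F\cdot f}$ does not factor through $\lambda_f$. You flag this as something to verify, but it is not a routine verification: Gantmacher's theorem for Banach spaces does not transfer automatically to this notion of weak compactness, and neither the paper nor its references supply such a statement. (To be fair, the paper's own ``Similarly, $F\cdot f\in WAP(\mathcal{A})$'' conceals the same left--right asymmetry; the symmetric double-limit characterization of Theorem \ref{T1} is what ultimately makes the two orbit maps interchangeable, but that theorem appears only later.) Until you either prove the Fr\'echet Gantmacher statement or reroute the $F\cdot f$ case through the double-limit criterion, that half of part (i) is not established. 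Part (ii) is fine modulo part (i).
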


\begin{proof}
(i).
Clearly, for $a\in \mathcal{A}$, $f\in WAP(\mathcal{A})$, and $F\in \mathcal{A}^{**}$, we have $f\cdot F\in \mathcal{A}^*$ and $a\cdot (f\cdot F)=(a\cdot f)\cdot F$.
By assumption, let $U$ be a zero neighborhood in $\mathcal{A}$ such that
$\{a\cdot f: a\in U\overline{\}}^w$ is weakly compact in $\mathcal{A}^*$. For each net $(a_{\alpha})\subseteq U$ there exists a subnet $(a_{\beta})$ such that $(a_{\beta}\cdot f)$ is weakly convergent to some $g$ in $\{a\cdot f: a\in U\overline{\}}^w$. Thus, for every $G\in\mathcal{A}^{**}$, we have
$$
G((a_{\beta}\cdot f)\cdot F)=F\Diamond G(a_{\beta}\cdot f)\longrightarrow_{\beta} F\Diamond G(g)=G(g\cdot F).
$$
Therefore, $a_\beta\cdot (f\cdot F)\stackrel{w}{\longrightarrow}g\cdot F$ and consequently $g\cdot F\in \{a\cdot (f\cdot F): a\in U\overline{\}}^w$. Hence, $\{a\cdot (f\cdot F): a\in U\overline{\}}^w$ is weakly compace in $\mathcal{A}^*$ and $f\cdot F\in WAP(\mathcal{A})$. Now, let $f\in WAP(\mathcal{A})$ and $F\in WAP(\mathcal{A})^*$. 
By applying \cite[Proposition 22.12]{Meise}, there exists $\bar{F}\in \mathcal{A}^{**}$ such that $\bar{F}|_{WAP(\mathcal{A})}=F$. Thus, by Lemma \ref{111llll1}, $f\cdot\bar{F}(a)=f\cdot F(a)$ for every $a\in\mathcal{A}$. Since $f\cdot\bar{F}\in WAP(\mathcal{A})$, we have $f\cdot F\in WAP(\mathcal{A})$. Similarly, $F\cdot f\in WAP(\mathcal{A})$.

(ii).
It follows in the same way as \cite[Lemma 3.4]{Gulick}.
\end{proof}

We now introduce a countable system of increasing submultiplicative seminorms on $\mathcal{A}^{**}$. Let $(\mathcal{A},p_{\ell})$ be a Fr\'echet algebra and $M_{\ell}=\{a\in\mathcal{A}: p_{\ell}(a)<1\}$ ($\ell\in\mathbb{N}$). For $f\in\mathcal{A}^*$, define $q_{M_{\ell}}(f)=\sup_{a\in M_{\ell}} |f(a)|$ ($\ell\in\mathbb{N}$). For each $\ell\in\mathbb{N}$, by Alaoglu-Bourbaki theorem \cite[Theorem 23.5]{Meise}, $M^{\circ}_{\ell}$ is weak$^*$-compact and so is weak$^*$-bounded in $\mathcal{A}^*$; see \cite[Remark 23.6]{Meise}. In Lemma \ref{fundamentalllll}, we show that the seminorms $(q_{\ell})_{\ell\in\mathbb{N}}$, defined by $q_{\ell}(F)=\sup_{f\in M^{\circ}_{\ell}} |F(f)|$ $(F\in\mathcal{A}^{**})$, create a fundamental system of seminorms on $\mathcal{A}^{**}$.

\begin{lemma}\label{plllqlllfunde}
If $(\mathcal{A},p_{\ell})$ is a Fr\'echet algebra and $f\in\mathcal{A}^*$, then the following are valid.
\begin{enumerate}
\item[(i)]
There exist $\ell\in\mathbb{N}$ and $\lambda>0$ such that
$|f(a)|\leq \lambda p_{\ell}(a)$ $(a\in\mathcal{A})$.
\item[(ii)]
There exists an $\ell\in\mathbb{N}$ such that $q_{M_{\ell}}(f)<\infty$.
\item[(iii)]
For all $a\in\mathcal{A}$ and $\ell\in\mathbb{N}$ we have $|f(a)|\leq q_{M_{\ell}}(f)p_{\ell}(a)$.
\item[(iv)]
For all $F\in\mathcal{A}^{**}$ and $\ell\in\mathbb{N}$ we have $|F(f)|\leq q_{\ell}(F) q_{M_{\ell}}(f)$.
\end{enumerate}
\end{lemma}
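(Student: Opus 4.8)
The plan is to prove the four inequalities in order, each one feeding into the next, using nothing beyond the continuity of $f$ together with the fact that $(p_\ell)_{\ell\in\mathbb{N}}$ is an \emph{increasing} fundamental system of seminorms. For (i) I would start from continuity of $f$ at $0$: the set $\{a\in\mathcal{A}:|f(a)|<1\}$ is a zero neighborhood, so it contains a basic one. Here the increasing hypothesis is decisive: since finite intersections $\bigcap_i\{p_{\ell_i}<\varepsilon_i\}$ contain $\{p_{\ell}<\varepsilon\}$ with $\ell=\max_i\ell_i$ and $\varepsilon=\min_i\varepsilon_i$, a \emph{single} seminorm already gives a basic zero neighborhood, and no maximum over finitely many indices is needed. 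Thus $p_\ell(a)<\varepsilon$ implies $|f(a)|<1$ for some fixed $\ell$ and $\varepsilon>0$. A homogeneity argument then globalizes this: for $a$ with $p_\ell(a)\neq 0$ one rescales to $b=\frac{\varepsilon}{2p_\ell(a)}\,a$, which satisfies $p_\ell(b)=\varepsilon/2<\varepsilon$, and reads off $|f(a)|\leq \frac{2}{\varepsilon}\,p_\ell(a)$; for $p_\ell(a)=0$ the vectors $ta$ lie in the neighborhood for all $t>0$, forcing $f(a)=0$. Taking $\lambda=2/\varepsilon$ gives (i).

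Part (ii) is then immediate: with $\ell$ and $\lambda$ as in (i), every $a\in M_\ell$ has $p_\ell(a)<1$, so $|f(a)|\leq\lambda$ and hence $q_{M_\ell}(f)=\sup_{a\in M_\ell}|f(a)|\leq\lambda<\infty$. For (iii) I would argue by the positive homogeneity of $p_\ell$ directly from the definition of $q_{M_\ell}(f)$, for an arbitrary fixed index $\ell$. If $p_\ell(a)>0$, then $ta\in M_\ell$ for every $t$ with $0<t<1/p_\ell(a)$, whence $t|f(a)|=|f(ta)|\leq q_{M_\ell}(f)$; letting $t\uparrow 1/p_\ell(a)$ yields $|f(a)|\leq q_{M_\ell}(f)\,p_\ell(a)$. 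If $p_\ell(a)=0$, then $ta\in M_\ell$ for all $t>0$, so $t|f(a)|\leq q_{M_\ell}(f)$ forces $f(a)=0=q_{M_\ell}(f)\,p_\ell(a)$ whenever $q_{M_\ell}(f)<\infty$, the inequality being vacuous otherwise. I emphasize that this step uses only the definitions and so holds for \emph{every} index, not merely the distinguished one from (i)--(ii).

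Finally, for (iv) the idea is to normalize $f$ into the polar $M_\ell^\circ$ and then invoke the definition of $q_\ell$. Fix $\ell$ and $F$, and treat the main case $0<q_{M_\ell}(f)<\infty$ (the case $q_{M_\ell}(f)=0$ forces $f=0$ by (iii), and the case $q_{M_\ell}(f)=\infty$ makes the asserted bound trivial). Setting $g:=f/q_{M_\ell}(f)$, the definition of $q_{M_\ell}$ gives $|g(a)|\leq 1$ for all $a\in M_\ell$, i.e. $g\in M_\ell^\circ$; hence $|F(g)|\leq q_\ell(F)$ by the definition $q_\ell(F)=\sup_{h\in M_\ell^\circ}|F(h)|$. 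Multiplying through by $q_{M_\ell}(f)$ returns $|F(f)|\leq q_\ell(F)\,q_{M_\ell}(f)$.

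I expect the genuinely delicate points to be bookkeeping rather than conceptual: making sure that a single seminorm suffices in (i), which is exactly where the increasing hypothesis on $(p_\ell)$ enters, and cleanly disposing of the degenerate cases $p_\ell(a)=0$ in (iii) and $q_{M_\ell}(f)\in\{0,\infty\}$ in (iv). The one recurring mechanism in all four parts is the rescaling/normalization trick, which converts the merely local boundedness of $f$ near $0$ into the homogeneous global estimates, and which in (iv) manifests as normalizing $f$ to sit inside $M_\ell^\circ$.
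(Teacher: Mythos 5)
Your proposal is correct and follows essentially the same route as the paper: continuity of $f$ at $0$ plus the increasing/fundamental-system property yields a single-seminorm neighborhood for (i), and the remaining parts follow by the same rescaling/normalization into $M_\ell$ and $M_\ell^\circ$ that the paper performs via division by $p_\ell(a)+\varepsilon$ and $q_{M_\ell}(f)+\varepsilon$. The only difference is cosmetic: you handle the degenerate cases $p_\ell(a)=0$ and $q_{M_\ell}(f)\in\{0,\infty\}$ by explicit case-splitting, whereas the paper absorbs them uniformly with the added $\varepsilon$.
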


\begin{proof}
To prove (i), let $U:=\{z\in\mathbb{C}: |z|<1\}$. By continuity of $f$, there exists a zero neighborhood $V$ in $\mathcal{A}$ such that $f(V)\subseteq U$. Now, by \cite[page 251]{Meise}, there are $\ell\in\mathbb{N}$ and $\delta>0$ such that $\{a\in\mathcal{A}: p_{\ell}(a)<\delta\}\subseteq V$. Let $a\in\mathcal{A}$ and consider the arbitrary real number $\varepsilon>0$.
Since $p_{\ell}(\frac{\delta a}{2(p_{\ell}(a)+\varepsilon)})<\delta$, we have 
$|f(a)|<\frac{2}{\delta}(p_{\ell}(a)+\varepsilon)$.
By setting $\lambda:=\frac{2}{\delta}$, we have $|f(a)|\leq\lambda p_{\ell}(a)$, which completes the proof.

(ii) is an immediate consequence of (i).

To show (iii) and (iv), consider the arbitrary real number $\varepsilon>0$. Clearly,
$$
|f(\frac{a}{p_{\ell}(a)+\varepsilon})|\leq q_{M_{\ell}}(f)\;\;\;\;\;\;(a\in\mathcal{A},\;\ell\in\mathbb{N}),
$$
and (iii) holds.
Now, it is easy to see that $\frac{f}{q_{M_{\ell}}(f)+\varepsilon}\in M^{\circ}_{\ell}$ ($\ell\in\mathbb{N}$).
Thus,
$$
|F(\frac{f}{q_{M_{\ell}}(f)+\varepsilon})|\leq q_{\ell}(F)\;\;\;\;\;\;(F\in\mathcal{A}^{**},\;\ell\in\mathbb{N}),
$$
and (iv) holds.
\end{proof}

\begin{lemma}\label{fundamentalllll}
Let $(\mathcal{A},p_{\ell})$ be a Fr\'echet algebra. Then $(q_{\ell})_{\ell\in\mathbb{N}}$ is a fundamental system of submultiplicative seminorms on $\mathcal{A}^{**}$.
\end{lemma}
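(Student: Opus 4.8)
The plan is to verify the two defining properties separately: first that the sets $\{F\in\mathcal{A}^{**}:q_\ell(F)<1\}$ form a fundamental system of zero neighborhoods for the strong topology on $\mathcal{A}^{**}$, and second that each $q_\ell$ is submultiplicative for the Arens products. As preliminary observations, I note that since the $p_\ell$ are increasing we have $M_{\ell+1}\subseteq M_\ell$, whence $M_\ell^\circ\subseteq M_{\ell+1}^\circ$ and the $q_\ell$ are increasing; and that the family separates points of $\mathcal{A}^{**}$, because if $q_\ell(F)=0$ for all $\ell$ then, using Lemma \ref{plllqlllfunde}(ii) to write $\frac{f}{q_{M_\ell}(f)+\varepsilon}\in M_\ell^\circ$ for a suitable $\ell$, one forces $F(f)=0$ for every $f\in\mathcal{A}^*$.

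For the fundamental-system property I would argue in two directions. Each $q_\ell$ equals the strong seminorm $q_{M_\ell^\circ}$, and since $M_\ell^\circ$ is weak$^*$-compact (Alaoglu--Bourbaki), hence bounded in $\mathcal{A}^*$, $q_\ell$ is continuous for the strong topology. Conversely, given an arbitrary bounded set $B\subseteq\mathcal{A}^*$, I must dominate $q_B$ by some $q_\ell$, and this is where the completeness of $\mathcal{A}$ enters. Since a Fr\'echet space is barrelled, every bounded (indeed every weak$^*$-bounded) subset of $\mathcal{A}^*$ is equicontinuous, so there are $\ell\in\mathbb{N}$ and $\lambda>0$ with $B\subseteq\lambda M_\ell^\circ$, and therefore $q_B\leq\lambda q_\ell$. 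This yields the inclusion $\lambda^{-1}\{q_\ell<1\}\subseteq\{q_B<1\}$ and shows that $(q_\ell)$ generates the strong topology.

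The heart of the argument, and the step I expect to be the main obstacle, is submultiplicativity. Writing $\square$ for the first Arens product, with $(F\square G)(f)=F(G\cdot f)$ and $(G\cdot f)(a)=G(f\cdot a)$, I would apply Lemma \ref{plllqlllfunde}(iv) twice to obtain
$$q_\ell(F\square G)=\sup_{f\in M_\ell^\circ}|F(G\cdot f)|\leq q_\ell(F)\sup_{f\in M_\ell^\circ}q_{M_\ell}(G\cdot f)\leq q_\ell(F)\,q_\ell(G)\sup_{\substack{f\in M_\ell^\circ\\ a,b\in M_\ell}}|f(ab)|.$$
The crucial point is that $p_\ell$ is submultiplicative, so $M_\ell$ is idempotent: for $a,b\in M_\ell$ we have $p_\ell(ab)\leq p_\ell(a)p_\ell(b)<1$, i.e. $ab\in M_\ell$, whence $|f(ab)|\leq 1$ for every $f\in M_\ell^\circ$. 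The trailing supremum is thus at most $1$ and we conclude $q_\ell(F\square G)\leq q_\ell(F)q_\ell(G)$; the identical computation with the two module actions interchanged gives $q_\ell(F\Diamond G)\leq q_\ell(F)q_\ell(G)$. The delicate part is the bookkeeping of the nested module actions and polars, so that every product appearing in the estimate stays inside $M_\ell$; once that is arranged, the bound is forced by (iv) together with idempotency.
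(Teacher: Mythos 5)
Your proof is correct, and the submultiplicativity argument --- applying Lemma \ref{plllqlllfunde}(iv) twice and then using the submultiplicativity of $p_\ell$ (equivalently, the idempotency of $M_\ell$, so that $|f(ab)|\leq 1$ for $f\in M_\ell^\circ$) --- is essentially the same computation as in the paper, which instead carries the factor $q_{M_\ell}(f)\leq 1$ through the chain of estimates. The only difference is that you also verify the fundamental-system property via barrelledness and equicontinuity, a step the paper skips by declaring that it suffices to prove submultiplicativity; your addition is correct and fills that small gap.
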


\begin{proof}

It is enough to prove that the seminorms $(q_{\ell})_{\ell\in\mathbb{N}}$ are submultiplicative. By Lemma \ref{plllqlllfunde} (iii), for $f\in\mathcal{A}^*$ we have
$$
|f(ab)|\leq q_{M_{\ell}}(f)p_{\ell}(a)p_{\ell}(b)\;\;\;\;\;\;(a,b\in\mathcal{A},\;\ell\in\mathbb{N}),
$$
and
$$
q_{M_{\ell}}(f\cdot a)=\sup_{b\in M_{\ell}}|f(ab)|\leq q_{M_{\ell}}(f)p_{\ell}(a)\;\;\;\;\;\;(a\in\mathcal{A},\;\ell\in\mathbb{N}).
$$
Thus, by Lemma \ref{plllqlllfunde} (iv), for $G\in\mathcal{A}^{**}$ we have
$$
|G(f\cdot a)|\leq q_{\ell}(G) q_{M_{\ell}}(f)p_{\ell}(a)\;\;\;\;\;\;(a\in\mathcal{A},\;f\in\mathcal{A}^*,\;\ell\in\mathbb{N}),
$$
and 
$$
q_{M_{\ell}}(G\cdot f)\leq q_{\ell}(G) q_{M_{\ell}}(f)\;\;\;\;\;\;(f\in\mathcal{A}^*,\;\ell\in\mathbb{N}).
$$
Therefore, for $\ell\in\mathbb{N}$, $f\in\mathcal{A}^*$, and $F,G\in\mathcal{A}^{**}$, we have
$$
|F\square G(f)|=|F(G\cdot f)|\leq q_{\ell}(F)q_{M_{\ell}}(G\cdot f)\leq q_{\ell}(F)q_{\ell}(G) q_{M_{\ell}}(f),
$$
and
$$
q_{\ell}(F\square G)=\sup_{f\in M^{\circ}_{\ell}}|F\square G(f)|\leq q_{\ell}(F)q_{\ell}(G).
$$
Similar to the previous statements, one can show that $q_{\ell}(F\Diamond G)\leq q_{\ell}(F)q_{\ell}(G)$, for all $F,G\in\mathcal{A}^{**}$ and $\ell\in\mathbb{N}$.
\end{proof}

\begin{theorem}\label{aaaccclllfrechet}
If $(\mathcal{A},p_{\ell})$ if a Fr\'echet algebra, then,
$(\mathcal{A}^{**},q_{\ell})$ is also a Fr\'echet algebra.
\end{theorem}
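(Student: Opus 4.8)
The plan is to verify, one by one, the three defining attributes of a Fr\'echet algebra for $(\mathcal{A}^{**},q_\ell)$: that $(q_\ell)_{\ell\in\mathbb{N}}$ is a countable family of \emph{increasing} and \emph{submultiplicative} seminorms generating a Hausdorff topology under which the Arens product is continuous, and that the resulting (automatically metrizable) space is \emph{complete}. The algebraic and structural parts are cheap, having been prepared by the preceding lemmas; the real work is completeness.

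First I would assemble the structural facts. Submultiplicativity with respect to both $\square$ and $\Diamond$ is precisely the content of Lemma \ref{fundamentalllll}, and the same inequality $q_\ell(F\square G)\le q_\ell(F)q_\ell(G)$ gives joint, hence separate, continuity of the Arens products, so $(\mathcal{A}^{**},\square)$ (and likewise $(\mathcal{A}^{**},\Diamond)$) is a topological algebra. Countability is immediate. For the \emph{increasing} property I would note that since the $p_\ell$ increase we have $M_{\ell+1}\subseteq M_\ell$, so taking polars reverses the inclusion to $M_\ell^\circ\subseteq M_{\ell+1}^\circ$, whence $q_\ell\le q_{\ell+1}$. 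That $(q_\ell)$ is a fundamental system is Lemma \ref{fundamentalllll}; it remains only to confirm that the topology is Hausdorff, i.e. that the seminorms separate points: if $q_\ell(F)=0$ for every $\ell$, then $F$ vanishes on each $M_\ell^\circ$, and since by Lemma \ref{plllqlllfunde}(ii) every $f\in\mathcal{A}^*$ lies, after scaling, in some $M_\ell^\circ$, we conclude $F=0$. At this point $(\mathcal{A}^{**},q_\ell)$ is a metrizable locally convex topological algebra, and only completeness is left.

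For completeness, metrizability lets me argue with sequences. Given a $q_\ell$-Cauchy sequence $(F_n)$, for each fixed $f\in\mathcal{A}^*$ I choose (Lemma \ref{plllqlllfunde}(ii)) an $\ell$ with $q_{M_\ell}(f)<\infty$; then Lemma \ref{plllqlllfunde}(iv) gives $|F_n(f)-F_m(f)|\le q_\ell(F_n-F_m)\,q_{M_\ell}(f)\to 0$, so $(F_n(f))$ is Cauchy in $\mathbb{C}$ and I may set $F(f):=\lim_n F_n(f)$. This $F$ is linear, and because a Cauchy sequence is $q_\ell$-bounded, for $f\in M_\ell^\circ$ we get $|F(f)|=\lim_n|F_n(f)|\le \sup_n q_\ell(F_n)<\infty$, i.e. $q_\ell(F)<\infty$ for every $\ell$. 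The main obstacle, and the genuinely delicate step, is to certify that this pointwise limit $F$ is again an element of $\mathcal{A}^{**}$, that is, continuous on the strong dual $\mathcal{A}^*$; this is exactly the assertion that $\mathcal{A}^{**}$ is not enlarged when one passes to strong limits.

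I expect to settle this point through the local-Banach-space reading of the seminorms $q_\ell$: the set $M_\ell^\circ$ is the closed unit ball of the Banach space $(\mathcal{A}^*,q_{M_\ell})$ of $p_\ell$-continuous functionals, and $q_\ell(F)$ is just the norm of $F$ as a bounded functional on that Banach space; hence the bounds $q_\ell(F)<\infty$ present $F$ as a compatible family of bounded functionals on the increasing scale $(M_\ell^\circ)_\ell$, that is, as an element of the reduced projective limit that is $\mathcal{A}^{**}$. Equivalently, one may invoke the standard fact that the strong bidual of a metrizable (here Fr\'echet) locally convex space is complete. Finally, to see the convergence itself, fix $\ell$ and $\varepsilon>0$, pick $N$ with $q_\ell(F_n-F_m)<\varepsilon$ for $n,m\ge N$, and let $m\to\infty$ to obtain $q_\ell(F_n-F)\le\varepsilon$ for $n\ge N$; thus $F_n\to F$ in $(\mathcal{A}^{**},q_\ell)$. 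This gives completeness and so establishes that $(\mathcal{A}^{**},q_\ell)$ is a Fr\'echet algebra.
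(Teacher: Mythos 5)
Your proposal is correct and follows essentially the same route as the paper: the paper's own proof consists of citing Lemma \ref{fundamentalllll} for the fundamental system of submultiplicative seminorms and then deferring completeness to the argument of \cite[Proposition 25.9]{Meise} together with Lemma \ref{plllqlllfunde}, which is precisely the ``standard fact'' about the strong bidual of a metrizable space that your one genuinely delicate step (membership of the pointwise limit $F$ in $\mathcal{A}^{**}$) ultimately invokes. You merely write out the routine parts (monotonicity, Hausdorffness, the Cauchy-sequence bookkeeping) that the paper leaves implicit.
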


\begin{proof}
Similar to the proof of \cite[Proposition 25.9]{Meise} and by applying Lemma \ref{plllqlllfunde}, one can show that $\mathcal{A}^{**}$ is complete and consequently is a Fr\'echet algebra. 
\end{proof}

\section{Arens Regularity of Fr\'echet algebras}\label{section3}

Let $(\mathcal{A},p_{\ell})$ be a Fr\'echet algebra. By Lemma \ref{plllqlllfunde} (ii), for every $f\in\mathcal{A}^*$ there exists a smallest $\ell\in\mathbb{N}$ such that $q_{M_{\ell}}(f)<\infty$. Thus, by setting $U_f:=M_{\ell}$, we have $f|_{U_f}\in\ell^{\infty}(U_f)$.
Note that  $U_f$ is an idempotent neighborhood of $0$ and is a semigroup. For this reason and due to Eberlein \cite{Eberlein}, we can define weakly almost periodic functions on Fr\'echet algebras in such a way that Grothendieck theorem holds. 

\begin{definition}\label{D1}
Let $\mathcal{A}$ be a Fr\'echet algebra and $f\in\mathcal{A}^*$. We say that $f\in wap(\mathcal{A})$ if and only if $\{(a\cdot f)|_{U_f}: a\in {U_f}\overline{\}}^w$ is weakly compact in $\ell^{\infty}(U_f)$.
\end{definition}

By applying Young  theorem, mentioned at the beginning of section \ref{S0S}, the following proposition is immediate.

\begin{proposition}\label{Grothendieckrr}
Let $\mathcal{A}$ be a Fr\'echet algebra. Then $f\in wap(\mathcal{A})$ if and only if for  sequences $(a_m)$ and $(b_n)$ in $U_f$ with distinct elements
$$\lim_m\lim_nf(a_mb_n)=\lim_n\lim_mf(a_mb_n),$$
whenever both repeated limits exist. 
\end{proposition}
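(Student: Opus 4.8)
The plan is to reduce the statement to Young's double-limit characterization of weak almost periodicity on a discrete semigroup, applied to the restriction $g:=f|_{U_f}$. The starting observation is that $U_f=M_\ell$ for the smallest $\ell$ with $q_{M_\ell}(f)<\infty$ (Lemma \ref{plllqlllfunde}(ii)) is an idempotent zero-neighborhood, so $U_f\cdot U_f\subseteq U_f$; thus $U_f$ is a subsemigroup of $(\mathcal{A},\cdot)$, and $g=f|_{U_f}$ lies in $\ell^\infty(U_f)$ by the choice of $\ell$. Viewing $U_f$ as a discrete semigroup $S$, we have $\ell^\infty(U_f)=C_b(U_f)=C(\beta U_f)$, which is exactly the setting in which Young extended Grothendieck's theorem.

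First I would rewrite the defining condition of $wap(\mathcal{A})$ in semigroup language. For $a,b\in U_f$ we have $(a\cdot f)(b)=f(ba)$, and since $ba\in U_f$ this value equals $g(ba)$; hence the orbit $\{(a\cdot f)|_{U_f}:a\in U_f\}$ is precisely the set of right translates $\{R_a g:a\in U_f\}$ of $g$ inside $\ell^\infty(U_f)$, where $(R_a g)(b)=g(ba)$. By Definition \ref{D1}, $f\in wap(\mathcal{A})$ means exactly that the weak closure of this translate set is weakly compact in $\ell^\infty(U_f)$, i.e.\ that $g$ is a weakly almost periodic function on the semigroup $U_f$.

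Next I would invoke Young's theorem in the form recalled at the beginning of Section \ref{S0S}: for the semigroup $S=U_f$, a bounded function $g\in\ell^\infty(U_f)$ has a weakly relatively compact translate set if and only if
$$\lim_m\lim_n g(a_mb_n)=\lim_n\lim_m g(a_mb_n)$$
for all sequences $(a_m),(b_n)$ in $U_f$ for which both iterated limits exist. Since $a_mb_n\in U_f$, we have $g(a_mb_n)=f(a_mb_n)$, and substituting this converts the criterion into the asserted double-limit condition on $f$. This establishes both implications at once.

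The only points requiring care---and the closest thing to an obstacle---are bookkeeping rather than conceptual. One must confirm that the idempotence of $U_f$ guarantees $a_mb_n\in U_f$, so that every term $f(a_mb_n)$ is defined and uniformly bounded (ensuring the iterated limits, when they exist, are finite); and one must reconcile the module-action convention $(a\cdot f)(b)=f(ba)$ with the order of factors appearing in the statement, which only amounts to relabeling the two sequences. Finally, the restriction to sequences ``with distinct elements'' is harmless: any sequence admitting a constant subsequence forces the two iterated limits to coincide trivially, so the distinct-element sequences already form a sufficient test family, which is precisely the form in which Young's criterion is phrased. With these identifications the proposition follows immediately, as claimed.
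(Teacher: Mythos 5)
Your proposal is correct and follows exactly the route the paper takes: the paper's proof consists of the single remark that the statement is immediate from Young's theorem, and your argument simply spells out the reduction (idempotence of $U_f$ makes it a discrete subsemigroup, $(a\cdot f)|_{U_f}$ becomes the translate set of $g=f|_{U_f}$ in $\ell^{\infty}(U_f)$, then apply the double-limit criterion). No substantive difference.
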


\begin{lemma}\label{rwrwrw1111122}
Let $(\mathcal{A},p_{\ell})$ be a Fr\'echet algebra and $f\in wap(\mathcal{A})$. Then $a\cdot f$ and $f\cdot a$ belong to $wap(\mathcal{A})$, for every $a\in\mathcal{A}$.
\end{lemma}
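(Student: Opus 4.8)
The plan is to verify membership through the iterated‑limit criterion of Proposition \ref{Grothendieckrr} rather than through the weak‑compactness definition directly, since this lemma is precisely the analogue of Lemma \ref{111llll1} for $wap(\mathcal{A})$ and the identification $WAP(\mathcal{A})=wap(\mathcal{A})$ is not yet available. First I would record the module formulas $(a\cdot f)(b)=f(ba)$ and $(f\cdot a)(b)=f(ab)$ together with the submultiplicative estimates
$$
q_{M_{\ell}}(a\cdot f)\le p_{\ell}(a)\,q_{M_{\ell}}(f)\quad\text{and}\quad q_{M_{\ell}}(f\cdot a)\le p_{\ell}(a)\,q_{M_{\ell}}(f)\qquad(\ell\in\mathbb{N}),
$$
which are read off from the computation in the proof of Lemma \ref{fundamentalllll}. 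In particular, whenever $q_{M_{\ell}}(f)<\infty$ one also has $q_{M_{\ell}}(a\cdot f)<\infty$ and $q_{M_{\ell}}(f\cdot a)<\infty$, so $a\cdot f$ and $f\cdot a$ are bounded on $U_f$ and the translates below are well defined. Conceptually $T_{a\cdot f}=T_f\circ R_a$, where $R_a$ denotes right multiplication by $a$; this composition of a weakly compact operator with a bounded one is the invariant reason weak compactness should persist, and it is the idea I would transcribe into the sequential language of Proposition \ref{Grothendieckrr}.

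For $a\cdot f$ I would take sequences $(c_m)$ and $(d_n)$ of distinct elements for which both repeated limits of $(a\cdot f)(c_md_n)=f\big(c_m(d_na)\big)$ exist, and reduce to the known criterion for $f$ by scaling one factor into $U_f=M_{\ell}$. Setting $x_m:=c_m$ and $\tilde y_n:=\frac{1}{p_{\ell}(a)+1}\,d_na$, submultiplicativity gives $p_{\ell}(\tilde y_n)\le\frac{p_{\ell}(a)}{p_{\ell}(a)+1}\,p_{\ell}(d_n)<1$, so $x_m,\tilde y_n\in U_f$, while
$$
f(x_m\tilde y_n)=\frac{1}{p_{\ell}(a)+1}\,(a\cdot f)(c_md_n).
$$
Since a fixed positive scalar affects neither the existence nor the equality of the two repeated limits, Proposition \ref{Grothendieckrr} applied to $f$ yields $\lim_m\lim_n(a\cdot f)(c_md_n)=\lim_n\lim_m(a\cdot f)(c_md_n)$, which is the criterion for $a\cdot f$. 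The argument for $f\cdot a$ is symmetric: writing $(f\cdot a)(c_md_n)=f\big((ac_m)d_n\big)$ one scales the first factor $ac_m$ into $U_f$ instead. I would pass to a subsequence whenever the scaled sequence fails to consist of distinct elements (for instance when $a$ is not injective under the relevant multiplication), since repeated values leave the repeated limits unchanged.

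The main obstacle is the bookkeeping of neighborhoods. The estimates above only force the smallest index for $a\cdot f$ to be at most that of $f$, so the canonical neighborhood $U_{a\cdot f}$ may be strictly larger than $U_f$; the sequences admitted by the criterion for $a\cdot f$ then range over $U_{a\cdot f}$ and need not lie in $U_f$, whence the scaling step does not apply to them verbatim. I would resolve this by observing that $a\cdot f$ is bounded on $U_f$ and verifying weak almost periodicity on this neighborhood, in keeping with the existential formulation of weak compactness used for $WAP(\mathcal{A})$; the relation $T_{a\cdot f}=T_f\circ R_a$ is the neighborhood‑free statement standing behind the sequential computation. Making this reconciliation precise—rather than the scaling identity, which is routine—is where the real care is required.
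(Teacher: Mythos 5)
Your argument is essentially the paper's own proof: both verify membership via the iterated-limit criterion of Proposition \ref{Grothendieckrr}, absorbing the fixed element $a$ into one factor of the product and rescaling by $(p_{\ell}(a)+1)^{-1}$ so that the modified sequence lies in $U_f$, the constant scalar being harmless for the existence and equality of the repeated limits. The neighborhood mismatch you flag --- the criterion for $a\cdot f$ quantifies over sequences in $U_{a\cdot f}$, which contains and may strictly exceed $U_f$, so those sequences need not admit a uniform rescaling into $U_f$ --- is a genuine point that the paper's proof silently passes over (it simply takes its sequences in $U_f$), so your proposal is, if anything, more careful than the published argument.
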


\begin{proof}
By Proposition \ref{Grothendieckrr}, for sequences $(a_m)$ and $(b_n)$ in $U_f$ with distinct elements, we have 
$$\lim_m\lim_nf(a_mb_n)=\lim_n\lim_mf(a_mb_n),$$
whenever both limits exist. Recall that $U_f=M_{\ell}$, for some $\ell\in\mathbb{N}$.
Since $\frac{aa_m}{p_{\ell}(a)+1}\in U_f$, for every $a\in\mathcal{A}$ and $m\in\mathbb{N}$, we have
$$
\lim_m\lim_nf(\frac{aa_m}{p_{\ell}(a)+1}b_n)=\lim_n\lim_mf(\frac{aa_m}{p_{\ell}(a)+1}b_n),
$$
and consequently
$$
\lim_m\lim_n(f\cdot a)(a_mb_n)=\lim_n\lim_m(f\cdot a)(a_mb_n).
$$
Similarly, one can show that
$$
\lim_m\lim_n(a\cdot f)(a_mb_n)=\lim_n\lim_m(a\cdot f)(a_mb_n)\;\;\;\;\;\;(a\in\mathcal{A}).
$$
Therefore, $f\cdot a$ and $a\cdot f\in wap(\mathcal{A})$, for every $a\in\mathcal{A}$.  
\end{proof}

\begin{proposition}\label{WAPwapa}
For a Fr\'echet algebra $(\mathcal{A},p_{\ell})$, we have $WAP(\mathcal{A})= wap(\mathcal{A})$.
\end{proposition}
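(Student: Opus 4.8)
The plan is to reduce \emph{both} $WAP(\mathcal{A})$ and $wap(\mathcal{A})$ to one and the same double-limit interchange condition on the canonical neighborhood $U_f=M_{\ell_0}$, where $\ell_0$ is the smallest index with $q_{M_{\ell_0}}(f)<\infty$. For $wap(\mathcal{A})$ this is exactly Proposition \ref{Grothendieckrr}. For $WAP(\mathcal{A})$ the decisive observation is that the relevant family is \emph{equicontinuous}: since $U_f$ is idempotent, for $a,b\in U_f$ we have $ab\in U_f$, whence $q_{M_{\ell_0}}(a\cdot f)=\sup_{b\in M_{\ell_0}}|f(ab)|\le q_{M_{\ell_0}}(f)$, so $\{a\cdot f:a\in U_f\}\subseteq q_{M_{\ell_0}}(f)\,M_{\ell_0}^{\circ}$. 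By the Alaoglu--Bourbaki theorem \cite{Meise} this polar is weak$^*$-compact and equicontinuous, and on such a set Grothendieck's double-limit theorem \cite{Grothendiek} characterizes relative weak compactness in $\mathcal{A}^*$ through the interchange of iterated limits tested against sequences from $\mathcal{A}$; by homogeneity these sequences may be normalized into $U_f$. Thus weak compactness of $\{a\cdot f:a\in U_f\}^{w}$ in $\mathcal{A}^*$ becomes equivalent to the very interchange condition over $U_f$ that defines $wap(\mathcal{A})$.

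For the inclusion $wap(\mathcal{A})\subseteq WAP(\mathcal{A})$ I would take $U=U_f$ as witness. Given $f\in wap(\mathcal{A})$, Proposition \ref{Grothendieckrr} supplies the interchange over $U_f$, and the equicontinuity-plus-Grothendieck equivalence of the first paragraph then yields that $\{a\cdot f:a\in U_f\}^{w}$ is weakly compact in $\mathcal{A}^*$. Because this set lies inside the weak$^*$-compact polar $q_{M_{\ell_0}}(f)\,M_{\ell_0}^{\circ}$, its weak closure stays there and is weakly compact, so $T_f$ is weakly compact and $f\in WAP(\mathcal{A})$. The point that needs care here is that relative weak compactness in $\mathcal{A}^*$ for an equicontinuous family is governed by the interchange against $\mathcal{A}$ rather than against all of $\mathcal{A}^{**}$; the norm estimate $|F(g)|\le q_{\ell_0}(F)\,q_{M_{\ell_0}}(g)$ of Lemma \ref{plllqlllfunde}(iv) is exactly what makes each $F\in\mathcal{A}^{**}$ act continuously on this polar and thereby legitimizes the reduction.

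For the reverse inclusion $WAP(\mathcal{A})\subseteq wap(\mathcal{A})$ I would argue by weak cluster points: given $f\in WAP(\mathcal{A})$ with witnessing neighborhood $U$, and sequences $(a_m),(b_n)$ in $U$ for which both iterated limits of $f(a_mb_n)$ exist, I would extract a weak cluster point $g$ of $(a_m\cdot f)$ in $\mathcal{A}^*$, evaluate at the point functionals $\hat b_n\in\mathcal{A}^{**}$, and symmetrize to force the two iterated limits to coincide, giving the interchange for sequences in $U$. The \emph{main obstacle}, which I expect to be the delicate heart of the proof, is the reconciliation of neighborhoods: the definition of $WAP(\mathcal{A})$ guarantees the weak compactness of $T_f$ only for \emph{some} neighborhood $U\subseteq cM_{\ell_1}$ with $\ell_1\ge\ell_0$, whereas the criterion for $wap(\mathcal{A})$ must be verified on the possibly larger and (in infinite dimensions) unbounded neighborhood $U_f=M_{\ell_0}$. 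Since a weakly compact operator transports only \emph{bounded} sets to relatively weakly compact sets, and $U_f$ need not be bounded in $\mathcal{A}$, one cannot simply enlarge the witness to $U_f$, nor are the cluster points above guaranteed to exist for sequences in $U_f$ that are unbounded in $\mathcal{A}$.

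To overcome this I would show that the witness may always be taken to be $U_f$ itself. The equicontinuity estimate $\{a\cdot f:a\in U_f\}\subseteq q_{M_{\ell_0}}(f)\,M_{\ell_0}^{\circ}$ already places $T_f(U_f)$ inside a weak$^*$-compact, hence equicontinuous, set, so weak$^*$ cluster points for sequences drawn from $U_f$ always exist; the interchange coming from the given witness $U$, combined with Grothendieck's criterion applied on this polar, is then used to promote those weak$^*$ cluster points to genuine weak ones, propagating the interchange from $U$ up to all of $U_f$. Once the witness is identified with $U_f$, the equivalence of the first paragraph closes the argument and gives $WAP(\mathcal{A})=wap(\mathcal{A})$; this promotion step, passing from the arbitrary witnessing neighborhood to the maximal boundedness neighborhood $U_f$, is where I expect essentially all of the difficulty to lie.
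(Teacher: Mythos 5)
Your route differs from the paper's: you funnel everything through the Grothendieck--Young double-limit criterion on $U_f$, whereas the paper works directly with the restriction map $T\colon g\mapsto g|_{V}$ and transfers weak compactness between $\mathcal{A}^*$ and $\ell^{\infty}(V)$ (weak continuity of $T$ for $WAP(\mathcal{A})\subseteq wap(\mathcal{A})$, and injectivity of $T$, hence surjectivity of $T^*$, for the converse). Your $wap(\mathcal{A})\subseteq WAP(\mathcal{A})$ direction is sound and in substance the same as the paper's: the equicontinuity bound $\{a\cdot f:a\in U_f\}\subseteq q_{M_{\ell_0}}(f)\,M_{\ell_0}^{\circ}$ together with Lemma \ref{plllqlllfunde}(iv) is exactly what lets every $F\in\mathcal{A}^{**}$ factor through $\ell^{\infty}(U_f)^*$ on this set, which is the content of the paper's argument that $T^*$ is onto.

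The gap is in $WAP(\mathcal{A})\subseteq wap(\mathcal{A})$. You correctly isolate the difficulty --- the witness $U$ only contains some $\delta M_{\ell_1}$ with $\ell_1\ge\ell_0$, while $wap(\mathcal{A})$ as defined tests the larger neighborhood $U_f=M_{\ell_0}$ --- but your announced resolution is not a proof. Rescaling a sequence $(a_m)\subseteq M_{\ell_0}$ into $U$ replaces $a_m$ by $\delta a_m/(p_{\ell_1}(a_m)+1)$; since $|f(a_mb_n)|\le q_{M_{\ell_0}}(f)$ on the idempotent set $M_{\ell_0}$, whenever $p_{\ell_1}(a_m)\to\infty$ both iterated limits of the rescaled expression are $0$, so the interchange over $U$ yields no information about the interchange over $U_f$. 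Moreover, ``applying Grothendieck's criterion on the polar'' to promote weak$^*$ cluster points to weak ones presupposes the very interchange over $U_f$ that you are trying to establish, so the promotion step as sketched is circular. You should be aware that the paper does not overcome this either: its proof produces weak compactness of the weak closure of $\{(a\cdot f)|_{V_f}:a\in V_f\}$ only for the smaller neighborhood $V_f\subseteq U\cap U_f$ and then declares $f\in wap(\mathcal{A})$, which does not literally match Definition \ref{D1}. Your diagnosis of where the difficulty lies is thus more candid than the paper's treatment, but to close the argument you would need either a genuine mechanism for passing from $\delta M_{\ell_1}$ to $M_{\ell_0}$, or a reformulation of $wap(\mathcal{A})$ in which any idempotent zero neighborhood on which $f$ is bounded may serve as the test set.
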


\begin{proof}
If $f\in WAP(\mathcal{A})$, then there exists a zero neighborhood $U$ such that the set $\{a\cdot f: a\in U\overline{\}}^w$ is weakly compact in $\mathcal{A}^*$. In addition, $f|_{U_f}\in\ell^{\infty}(U_f)$ and there are $\ell_0\in\mathbb{N}$ and $0<\delta<1$ so that the idempotent set $\{a\in\mathcal{A}: p_{\ell_0}(a)<\delta\}$ is a subset of $U\cap U_f$. By choosing the smallest $\ell_0$ with this properties, set 
$$V_f:=\{a\in\mathcal{A}: p_{\ell_0}(a)<\delta\}.$$
 Thus, $f|_{V_f}\in\ell^{\infty}(V_f)$. Now, we show that $\{(a\cdot f)|_{V_f}: a\in V_f\overline{\}}^w$ is weakly compact in $\ell^{\infty}(V_f)$ and so $f\in wap(\mathcal{A})$. Define
$$
T:\{a\cdot f: a\in V_f\overline{\}}^w\subseteq\mathcal{A}^*\rightarrow \ell^{\infty}(V_f),\;\;\;g\mapsto g|_{V_f}.
$$
Obviously, $T$ is linear. Let $G\in\ell^{\infty}(V_f)^*$. Define $F:\{a\cdot f: a\in V_f\overline{\}}^w\rightarrow\mathbb{C}$ by $F(g):=G(g|_{V_f})$,
for each $g\in\{a\cdot f: a\in V_f\overline{\}}^w$. Clearly, $F$ is linear and continuous. Indeed, if the net $(g_{\alpha})$ is convergent to some $g$ in $\{a\cdot f: a\in V_f\overline{\}}^w$ with respect to the relative topology of the strong topology on $\mathcal{A}^*$, then
$$
\|g_{\alpha}|_{V_f}-g|_{V_f}\|_{\infty}=\sup_{a\in V_f}|(g_{\alpha}-g)(a)|\longrightarrow_{\alpha} 0.
$$
By continuity of $G$, we have
$G(g_{\alpha}|_{V_f})\longrightarrow_{\alpha} G(g|_{V_f})$ and so $F$ is continuous.
If $g_{\alpha}\stackrel{w}{\longrightarrow}_{\alpha}g$ in $\{a\cdot f: a\in V_f\overline{\}}^w$, then
$$
G(g_{\alpha}|_{V_f})=F(g_{\alpha})\longrightarrow_{\alpha} F(g)=G(g|_{V_f}),
$$
and $g_{\alpha}|_{V_f}\stackrel{w}{\longrightarrow_{\alpha}}g|_{V_f}$ in $\ell^{\infty}(V_f)$. Thus, $T$ is weakly continuous. We claim that the range of $T$ is $\{a\cdot f|_{V_f}: a\in V_f\overline{\}}^w$. 
For $g\in\{a\cdot f: a\in V_f\overline{\}}^w$, there exists a net $(a_{\alpha})$ in $V_f$ such that $a_{\alpha}\cdot f\stackrel{w}{\longrightarrow}_{\alpha}g$. By continuity of $T$, we have $(a_{\alpha}\cdot f)|_{V_f}\stackrel{w}{\longrightarrow}_{\alpha}g|_{V_f}$ and $g|_{V_f}\in\{(a\cdot f)|_{V_f}:a\in V_f\overline{\}}^w$. Thus, 
$$T\big{(}\{a\cdot f: a\in V_f\overline{\}}^w\big{)}\subseteq\{(a\cdot f)|_{V_f}: a\in V_f\overline{\}}^w.$$ 
Conversely, if $h\in\{(a\cdot f)|_{V_f}: a\in V_f\overline{\}}^w$, then there exists a net $(a_{\alpha})\subseteq V_f$ so that $T(a_{\alpha}\cdot f)\stackrel{w}{\longrightarrow}_{\alpha}h$. Hence, by $(a_{\alpha}\cdot f)\subseteq\{a\cdot f: a\in V_f\overline{\}}^w$, we have $h\in \overline{T(\{a\cdot f: a\in V_f\overline{\}}^w)}^w$ and
$$
\{(a\cdot f)|_{V_f}: a\in V_f\overline{\}}^w\subseteq T(\{a\cdot f: a\in V_f\overline{\}}^w).
$$
Thus, $WAP(\mathcal{A})\subseteq wap(\mathcal{A})$, since the continuous image of a compact set is compact. 

To prove inclusion $wap(\mathcal{A})\subseteq WAP(\mathcal{A})$, let $f\in wap(\mathcal{A})$. Therefore, the set $\{(a\cdot f)|_{U_f}: a\in U_f\overline{\}}^w$ is weakly compact in $\ell^{\infty}(U_f)$, where $U_f=M_{\ell}$ for some $\ell\in\mathbb{N}$. For convenience, let $B:=\{a\cdot f: a\in U_f\overline{\}}^w$ and consider the linear map
$T:B\rightarrow \ell^{\infty}(U_f)$, $g\mapsto g|_{U_f}$.
For every $g,h\in B$ with $g|_{U_f}=h|_{U_f}$ we have  
$g(\frac{a}{p_{\ell}(a)+1})=h(\frac{a}{p_{\ell}(a)+1})$ ($a\in\mathcal{A}$) and so $g=h$. Thus, $T^*:\ell^{\infty}(U_f)^*\rightarrow B^*$ ($F\mapsto F\circ T$) is onto. Hence, for every $G\in\mathcal{A}^{**}$ there exists $F\in \ell^{\infty}(U_f)^*$ such that $G|_{B}=F\circ T$. Let $(a_{\alpha})\subseteq  U_f$. By assumption, there exists a subnet of $((a_{\alpha}\cdot f)|_{U_f})$ which is weakly convergent to some $g$ in $\{(a\cdot f)|_{U_f}: a\in U_f\overline{\}}^w$. Without loss of generality we can suppose that $(a_{\alpha}\cdot f)|_{U_f}\stackrel{w}{\longrightarrow}_{\alpha}g$. Thus,
$G(a_{\alpha}\cdot f)\longrightarrow_{\alpha} F(g)$ and 
the net $(a_{\alpha}\cdot f)$ is weakly convergent in $B$. For this reason, $B$ is weakly compact in $\mathcal{A}^*$.
\end{proof}

Let $X$ and $Y$ be non-empty, locally compact spaces, and $f:X\times Y\rightarrow \mathbb{C}$ be a bounded, separately continuous function. Therefore, $\{y\cdot f: y\in Y\overline{\}}^w$ is weakly compact in $C(\beta X)$, where $\beta X$ is the Stone-\u Cech compactification of $X$, if and only if 
$\lim_m\lim_n f(x_m,y_n)=\lim_n\lim_mf(x_m,y_n)$,
whenever both limits exist and $(x_m)$ and $(y_n)$ are sequences of distinct points
in $X$ and $Y$, respectively; see \cite[Theorem 3.3]{Dales}. Now, the following theorem can be obtained.

\begin{theorem}\label{T1}
Let $\mathcal{A}$ be a Fr\'echet algebra and $f\in\mathcal{A}^*$. The following statements are equivalent: 
\begin{enumerate}
\item[(i)]
$f\in WAP(\mathcal{A})$;
\item[(ii)]
$f\in wap(\mathcal{A})$;
\item[(iii)] 
$\{a\cdot(f|_{U_f}): a\in U_f\overline{\}}^w$ is weakly compact in $\ell^{\infty}(U_f)$;
\item[(iv)] 
$\big{\{}a\cdot (f|_{U_f})^{\beta}: a\in U_f\overline{\big{\}}}^w$ is weakly compact in $C(\beta (U_f))$;
\item[(v)] 
for all sequences $(a_m)$ and $(b_n)$ in $U_f$ with distinct elements
$$\lim_m\lim_nf(a_mb_n)=\lim_n\lim_mf(a_mb_n),$$
whenever both repeated limits exist. 
\end{enumerate}
\end{theorem}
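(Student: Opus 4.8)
The plan is to prove the full web of equivalences by stitching together the results already established, so that no single implication demands a long independent argument; the only genuinely new bookkeeping concerns statement (iv). The starting point is the elementary but decisive observation that $U_f=M_\ell$ is idempotent. Hence for $a,b\in U_f$ the product again lies in $U_f$, so the restriction $(a\cdot f)|_{U_f}$ depends only on $f|_{U_f}$; that is,
$$
(a\cdot f)|_{U_f}=a\cdot(f|_{U_f})\qquad(a\in U_f).
$$
Consequently the two families $\{(a\cdot f)|_{U_f}:a\in U_f\}$ and $\{a\cdot(f|_{U_f}):a\in U_f\}$ coincide as subsets of $\ell^{\infty}(U_f)$, and therefore so do their weak closures. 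This means Definition \ref{D1} reads verbatim as statement (iii), giving (ii) $\Leftrightarrow$ (iii) immediately.

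Next I would invoke the two propositions proved earlier. The equivalence (i) $\Leftrightarrow$ (ii) is exactly the content of Proposition \ref{WAPwapa}, and (ii) $\Leftrightarrow$ (v) is exactly Proposition \ref{Grothendieckrr}. At this point (i), (ii), (iii) and (v) are all linked, and it remains only to fold in (iv).

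For (iv) the plan is to apply the cited criterion \cite[Theorem 3.3]{Dales} with $X=Y=U_f$ regarded as a discrete (hence locally compact) space, and with the function $F\colon U_f\times U_f\to\mathbb{C}$, $F(a,b):=f(ab)$. Boundedness of $F$ follows from $f|_{U_f}\in\ell^{\infty}(U_f)$ together with the idempotency of $U_f$, while separate continuity is automatic on a discrete space. That theorem then asserts that the double-limit condition on $F$, which is precisely statement (v), holds if and only if $\overline{\{b\cdot F:b\in U_f\}}^{w}$ is weakly compact in $C(\beta U_f)$, where $(b\cdot F)(a)=F(a,b)=f(ab)$. Identifying $b\cdot F$ with $a\mapsto(f|_{U_f})(ab)$ and using uniqueness of the continuous extension to $\beta U_f$ to write $\big(a\cdot(f|_{U_f})\big)^{\beta}=a\cdot(f|_{U_f})^{\beta}$, this weak-compactness statement is exactly (iv). Hence (iv) $\Leftrightarrow$ (v), which closes the cycle of equivalences.

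The step I expect to require the most care is this last identification: matching the setup of \cite[Theorem 3.3]{Dales}, phrased for a translate action on $C(\beta X)$, to the present algebra-module framework. Specifically one must check that the symmetric condition (v) coincides with the hypothesis of that theorem once both sequences range over $U_f$ with distinct elements, and that the Stone-\v Cech extension intertwines the action, i.e. $\big(a\cdot(f|_{U_f})\big)^{\beta}=a\cdot(f|_{U_f})^{\beta}$. As a safeguard for the equivalence (iii) $\Leftrightarrow$ (iv) in particular, one can instead argue directly through the canonical isometric isomorphism $\ell^{\infty}(U_f)\cong C(\beta U_f)$, $g\mapsto g^{\beta}$, which is a homeomorphism for the weak topologies and hence transports weakly compact sets to weakly compact sets; this cross-check bypasses the double-limit criterion for that single equivalence and makes the notational matching transparent.
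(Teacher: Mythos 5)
Your proposal is correct and follows essentially the same route as the paper: (i)$\Leftrightarrow$(ii) via Proposition \ref{WAPwapa}, (ii)$\Leftrightarrow$(iii) from the definition of $wap(\mathcal{A})$ using idempotency of $U_f$, (ii)$\Leftrightarrow$(v) via the Young/Grothendieck criterion of Proposition \ref{Grothendieckrr}, and (iv)$\Leftrightarrow$(v) via \cite[Theorem 3.3]{Dales} with $X=Y=U_f$ discrete. Your cross-check for (iii)$\Leftrightarrow$(iv) through the isometric isomorphism $\ell^{\infty}(U_f)\cong C(\beta U_f)$ is a slightly cleaner packaging of the paper's map $g\mapsto g^{\beta}$ (the paper only proves (iii)$\Rightarrow$(iv) directly and closes the loop through the cycle), but it is not a different argument in substance.
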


\begin{proof}
(i)$\Leftrightarrow$(ii). 
By applying Proposition \ref{WAPwapa}, it is immediate.

(ii)$\Leftrightarrow$(iii). 
It is obvious, by Definition of $wap(\mathcal{A})$.

(iii)$\Rightarrow$(iv).
Consider the map
$T:(\ell^{\infty}(U_f),w)\rightarrow (C(\beta U_f),w)$ ($g\mapsto g^{\beta}$), 
where $g^{\beta}$ is defined by 
$$g^{\beta}(x)=\lim g(x_{\alpha})\;\;\;\;\;\;(x\in \beta U_f)$$
for some 
$(x_{\alpha})\subseteq U_f$ with $\hat{x}_{\alpha}\stackrel{w^*}{\longrightarrow}x$.
In a similar manner to the proof of Proposition \ref{WAPwapa}, one can show that $T$ is continuous and 
$$T\big{(}\{a\cdot g: a\in U_f\overline{\}}^w\big{)}=\{a\cdot (g^{\beta}): a\in U_f\overline{\}}^w,$$
for every $g\in\ell^{\infty}(U_f)$. This completes the proof.

(iv)$\Leftrightarrow$(v). In \cite[Theorem 3.3]{Dales}, it is enough to set $X=Y=U_f$ under the discrete topology.

(v)$\Leftrightarrow$(ii). It is Young theorem, pointed out in Proposition \ref{Grothendieckrr}.
\end{proof}

\begin{definition}\label{7120final}
A Fr\'echet algebra $\mathcal{A}$ is called Arens regular if $WAP(\mathcal{A})=\mathcal{A}^*$.
\end{definition}

\begin{proposition}\label{lll6262}
For a Fr\'echet algebra $(\mathcal{A},p_{\ell})$, the following are equivalent:
\begin{enumerate}
\item[(i)] $\mathcal{A}$ is Arens regular;
\item[(ii)] $\square=\Diamond$.
\end{enumerate}
\end{proposition}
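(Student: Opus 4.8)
The plan is to reduce the proposition to a pointwise Pym-type criterion: for a single $f\in\mathcal{A}^*$ one has $f\in WAP(\mathcal{A})$ if and only if $F\square G(f)=F\Diamond G(f)$ for all $F,G\in\mathcal{A}^{**}$. Granting this, the proposition is immediate, since (i) asserts $WAP(\mathcal{A})=\mathcal{A}^*$, i.e.\ that \emph{every} $f\in\mathcal{A}^*$ is weakly almost periodic, which by the pointwise criterion is exactly the statement that $F\square G(f)=F\Diamond G(f)$ for every $f$ and every pair $F,G$, and this is precisely (ii). Thus the whole content sits in the pointwise criterion, and I would isolate it as the engine of the argument, reading off the equivalence of (i) and (ii) merely by quantifying over $f$.

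To set up the criterion I would first record the iterated-limit representation of the two Arens products. Since $\square$ is weak$^*$-continuous in its first variable and $\Diamond$ in its second, and since $\widehat{\mathcal{A}}$ is weak$^*$-dense in $\mathcal{A}^{**}$ by the bipolar theorem, for $F,G\in\mathcal{A}^{**}$ one may choose nets $(a_\alpha),(b_\beta)$ in a scalar multiple of $U_f=M_\ell$ with $\widehat{a_\alpha}\to F$ and $\widehat{b_\beta}\to G$ weak$^*$, and then unwind the definitions of Lemma \ref{lGu11} and Lemma \ref{fundamentalllll} to obtain
$$F\square G(f)=\lim_\alpha\lim_\beta f(a_\alpha b_\beta),\qquad F\Diamond G(f)=\lim_\beta\lim_\alpha f(a_\alpha b_\beta).$$
For the implication $f\in WAP(\mathcal{A})\Rightarrow F\square G(f)=F\Diamond G(f)$ I would invoke Theorem \ref{T1}: weak compactness of $\{a\cdot(f|_{U_f}):a\in U_f\}^{-w}$ in $C(\beta U_f)$ is, through Grothendieck's double-limit criterion, exactly what permits interchanging the two iterated limits above, so the products coincide. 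This is the cleaner half.

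The reverse implication is where the difficulty concentrates. The naive move is to take sequences $(a_m),(b_n)$ in $U_f$ witnessing a failure of the double-limit condition of Theorem \ref{T1}(v), manufacture $F,G\in\mathcal{A}^{**}$ as weak$^*$-cluster points of $(\widehat{a_m}),(\widehat{b_n})$ realizing the two repeated limits as $F\square G(f)$ and $F\Diamond G(f)$, and let $\square=\Diamond$ force them equal. The obstacle—and the step I expect to cost the most—is that in the Fr\'echet setting $U_f=M_\ell$ is a \emph{neighborhood}, not a bounded set, so although $q_\ell(\widehat{a_m})\le 1$, the sequence need not be bounded in the remaining seminorms $q_{\ell'}$ and hence need not have a weak$^*$-cluster point in $\mathcal{A}^{**}$ via a bare appeal to Alaoglu--Bourbaki. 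To circumvent this I would recast the criterion operator-theoretically: $F\mapsto F\square G(f)=F(G\cdot f)$ is always weak$^*$-continuous, whereas $F\mapsto F\Diamond G(f)$ equals $(T_f^{**}G)(F)$ and generally is not, so $\square=\Diamond$ at $f$ is equivalent to $T_f^{**}(\mathcal{A}^{**})\subseteq\widehat{\mathcal{A}^*}$, which in turn is equivalent—by the Fr\'echet analogue of Gantmacher's theorem—to weak compactness of the defining operator $T_f$, i.e.\ to $f\in WAP(\mathcal{A})$. Controlling $T_f^{**}$ on the single weak$^*$-compact polar $M_\ell^{\circ}$ furnished by Alaoglu--Bourbaki, and thereby securing this Gantmacher-type equivalence in the Fr\'echet category, is the decisive point I would have to establish with care.
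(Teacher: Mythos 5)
Your overall architecture --- reducing Proposition \ref{lll6262} to the pointwise criterion ``$f\in WAP(\mathcal{A})$ if and only if $F\square G(f)=F\Diamond G(f)$ for all $F,G\in\mathcal{A}^{**}$'' and then quantifying over $f$ --- is the same as the paper's, and your forward half is essentially the paper's argument for (i)$\Rightarrow$(ii): represent the two products by iterated limits and invoke the double-limit condition of Theorem \ref{T1}(v). One caveat there: the paper takes \emph{bounded sequences} $(a_m),(b_n)$ representing $F\square G(f)$ and $F\Diamond G(f)$ (via \cite[Theorem 22.13]{Meise} and \cite[Proposition 3.1]{Dales}) and rescales them into $U_f$; since Theorem \ref{T1}(v) and the Grothendieck--Young criterion are \emph{sequential} statements, your version with nets drawn from a multiple of $U_f$ does not literally feed into the double-limit criterion --- you would instead have to use the weak compactness of $\{a\cdot f: a\in U_f\overline{\}}^w$ directly to upgrade weak$^*$-convergence of $a_\alpha\cdot f$ to weak convergence along a subnet. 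This is repairable, but the bounded-sequence route is the one that actually closes.

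The genuine gap is the reverse implication, and you have put your finger on exactly the right difficulty: a sequence in $U_f=M_\ell$ is controlled only by $p_\ell$, hence need not be bounded in $\mathcal{A}$, hence its image in $\mathcal{A}^{**}$ need not lie in an equicontinuous set, and Alaoglu--Bourbaki supplies no weak$^*$-cluster point. But your proposed repair --- a Gantmacher-type theorem for Fr\'echet spaces identifying weak compactness of $T_f$ with $T_f^{**}(\mathcal{A}^{**})\subseteq\mathcal{A}^*$ --- is precisely the statement you do not prove; you say yourself it is ``the decisive point I would have to establish with care.'' As written, the proposal therefore establishes only (i)$\Rightarrow$(ii). (For what it is worth, the paper is no better off on this point: it dismisses the converse with ``similarly, one can show that the converse holds,'' which conceals the same boundedness issue you identified. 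A complete argument must either prove the locally convex Gantmacher theorem you invoke, or show that any failure of the double-limit condition over $U_f$ can already be witnessed by bounded sequences, so that weak$^*$-cluster points $F,G$ exist and yield $F\square G(f)\neq F\Diamond G(f)$.)
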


\begin{proof}
Let $\mathcal{A}$ be Arens regular, $f\in\mathcal{A}^*$, and $F,G\in \mathcal{A}^{**}$.
By using \cite[Theorem 22.13]{Meise} and \cite[Proposition 3.1]{Dales}, there exist bounded sequences $(a_m)$ and $(b_n)$ in $\mathcal{A}$ such that 
$$
F\square G(f)=\lim_{m}\lim_{n}f(a_m b_n)\;\;\;\text{and}\;\;\; F\Diamond G(f)=\lim_{n}\lim_{m}f(a_m b_n).$$
Moreover, by assumption, the set $\{(a\cdot f)|_{U_f}: a\in U_f\overline{\}}^w$ is weakly compact in $\ell^{\infty}(U_f)$, whenever $U_f=M_{\ell}$ for some $\ell\in\mathbb{N}$.
By using \cite[Remark 23.2]{Meise}, we have $K_1:=\sup_{m\in\mathbb{N}}p_{\ell}(a_m)<\infty$ and $K_2:=\sup_{n\in\mathbb{N}}p_{\ell}(b_n)<\infty$. Since the sequences $(\frac{a_m}{K_1+1})$ and $(\frac{b_n}{K_2+1})$ belong to $U_f$, by Theorem \ref{T1}, we have
$$
\lim_m\lim_nf(\frac{a_m}{K_1+1}\cdot\frac{b_n}{K_2+1})=\lim_n\lim_mf(\frac{a_m}{K_1+1}\cdot\frac{b_n}{K_2+1}),
$$
whenever both limits exist. Thus, $F\square G(f)=F\Diamond G(f)$. Similarly, one can show that the converse holds. 
\end{proof}

Due to Theorem \ref{T1} and Proposition \ref{lll6262}, we can conclude the following proposition which its proof is analogous to the Banach case and so will be omitted.

\begin{proposition}
For a Fr\'echet algebra $\mathcal{A}$, the following statements are equivalent:
\begin{enumerate}
\item[(i)] $\mathcal{A}$ is Arens regular;
\item[(ii)] for every $G\in\mathcal{A}^{**}$, the linear map $F\mapsto G\square F$ is continuous on $\mathcal{A}^{**}$ with respect to the weak$^*$ topology;
\item[(iii)] for every $G\in\mathcal{A}^{**}$, the linear map $F\mapsto F\Diamond G$ is continuous on $\mathcal{A}^{**}$ with respect to the weak$^*$ topology.
\end{enumerate}
\end{proposition}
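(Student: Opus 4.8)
The plan is to reduce everything to the criterion of Proposition \ref{lll6262}, namely that (i) is equivalent to the identity $\square=\Diamond$ on $\mathcal A^{**}$, and then to run the classical ``automatic continuity in one slot, plus weak$^*$-density of $\widehat{\mathcal A}$'' argument. Because the two Arens products are interchanged when the roles of the two variables are reversed, it suffices to establish (i)$\Leftrightarrow$(ii) in full and then obtain (i)$\Leftrightarrow$(iii) by the symmetric computation with $\Diamond$ in place of $\square$. Throughout I write $\hat a\in\mathcal A^{**}$ for the canonical image of $a\in\mathcal A$, and I use that $\widehat{\mathcal A}$ is weak$^*$-dense in $\mathcal A^{**}$, which is the content already invoked in Proposition \ref{lll6262} through \cite[Theorem 22.13]{Meise}.

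First I would record two elementary facts read off directly from the definitions $F\square G(f)=F(G\cdot f)$ and $F\Diamond G(f)=G(f\cdot F)$. Fixing $G$, the map $F\mapsto F\square G$ is \emph{always} weak$^*$-continuous, since $F\square G(f)=F(G\cdot f)$ depends on $F$ only through evaluation against the fixed functional $G\cdot f\in\mathcal A^*$; dually, $F\mapsto G\Diamond F$ is always weak$^*$-continuous because $G\Diamond F(f)=F(f\cdot G)$. Next I would check that the two products agree on the image of $\mathcal A$ in the appropriate slot. A short computation using $\hat b\cdot f=b\cdot f$ yields, for all $b\in\mathcal A$ and $f\in\mathcal A^*$,
$$
G\square\hat b(f)=G(b\cdot f)=G\Diamond\hat b(f),
$$
so that $G\square\hat b=G\Diamond\hat b$; symmetrically, $\hat a\Diamond G=\hat a\square G$ for every $a\in\mathcal A$.

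With these in hand the nontrivial direction is immediate. For (ii)$\Rightarrow$(i), fix $F,G\in\mathcal A^{**}$ and choose a net $(\hat b_\alpha)$ in $\widehat{\mathcal A}$ with $\hat b_\alpha\stackrel{w^*}{\to}F$. Hypothesis (ii) gives $G\square\hat b_\alpha\stackrel{w^*}{\to}G\square F$, the automatic continuity of $\Diamond$ in its second slot gives $G\Diamond\hat b_\alpha\stackrel{w^*}{\to}G\Diamond F$, and $G\square\hat b_\alpha=G\Diamond\hat b_\alpha$ for every $\alpha$; passing to the limit yields $G\square F=G\Diamond F$, and since $F,G$ were arbitrary this is $\square=\Diamond$, i.e.\ (i). The converse (i)$\Rightarrow$(ii) is a one-liner: if $\square=\Diamond$ then $F\mapsto G\square F=G\Diamond F$ is weak$^*$-continuous by the automatic continuity of $\Diamond$ in its second slot. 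The equivalence (i)$\Leftrightarrow$(iii) is obtained in the same way, now using that $F\mapsto F\square G$ is always weak$^*$-continuous and that $\hat a\Diamond G=\hat a\square G$.

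The only genuinely delicate point is the density step, so I would take care that the Fr\'echet analogue of Goldstine's theorem is applied correctly: one needs $\widehat{\mathcal A}$ to be weak$^*$-dense in $(\mathcal A^{**},\sigma(\mathcal A^{**},\mathcal A^*))$, and one must be able to take the approximating net $(\hat b_\alpha)$ \emph{bounded}, so that the limit manipulations remain inside the regime where the weak$^*$ topology, the module actions, and the Arens products interact as in the Banach case. Once the weak$^*$-density and the availability of a bounded approximating net are secured from \cite[Theorem 22.13]{Meise} and \cite[Proposition 3.1]{Dales}, the remainder is the formal continuity-plus-density passage to the limit carried out above, exactly as in the Banach setting.
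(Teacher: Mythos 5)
Your argument is correct and is exactly the standard Banach-case proof that the paper alludes to when it omits the argument as ``analogous to the Banach case'': automatic weak$^*$-continuity of $F\mapsto G\Diamond F$ and of $F\mapsto F\square G$, agreement of the two products when one factor lies in $\widehat{\mathcal A}$, and weak$^*$-density of $\widehat{\mathcal A}$ in $\mathcal A^{**}$. Your final worry about boundedness of the approximating net is superfluous: the maps you use are weak$^*$-continuous on all of $\mathcal A^{**}$ because they are evaluations against fixed elements of $\mathcal A^*$ (namely $f\cdot G$ and $G\cdot f$), so the density-plus-continuity limit passage requires no bounded net and no appeal to \cite[Proposition 3.1]{Dales}.
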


Note that one can define $WAP(\mathcal{A})$ for every locally convex algebra $\mathcal{A}$. Thus similar to the Definition \ref{7120final}, Arens regular locally convex algebras are defined. Consequently, the following result is immediate. 

\begin{proposition}
Let $\mathcal{A}$ be a Fr\'echet algebra. The following statements are equivalent:
\begin{enumerate}
\item[(i)]
$\mathcal{A}^{**}$ is Arens regular;
\item[(ii)]
$\mathcal{A}$ and $WAP(\mathcal{A})^*$ are Arens regular;
\item[(iii)]
$\mathcal{A}$ is Arens regular and $WAP(\mathcal{A})^{**}=WAP(\mathcal{A}^{**})$.
\end{enumerate}
\end{proposition}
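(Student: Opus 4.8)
The plan is to reduce the three equivalences to two facts, after which everything is a matter of unwinding Definition \ref{7120final}. The two facts are: \textbf{(A)} Arens regularity descends from $\mathcal{A}^{**}$ to the canonical image of $\mathcal{A}$, so that (i) already forces $\mathcal{A}$ to be Arens regular; and \textbf{(B)} once $\mathcal{A}$ is Arens regular, the identity $WAP(\mathcal{A})=\mathcal{A}^*$ collapses the convolution/evolution algebra $(WAP(\mathcal{A})^*,\ast,\circ)$ onto $(\mathcal{A}^{**},\square,\Diamond)$ and identifies $WAP(\mathcal{A})^{**}$ with $(\mathcal{A}^{**})^*$. Granting (A) and (B), each of (ii) and (iii) becomes the single condition ``$\mathcal{A}^{**}$ is Arens regular'' adjoined to ``$\mathcal{A}$ is Arens regular,'' and the equivalences drop out, as I spell out in the last paragraph. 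I expect (A) to be the only substantive step.

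To set up (A), I would first record the standard identities for the canonical embedding $a\mapsto\widehat{a}$: it maps $\mathcal{A}$ into a common subalgebra of $(\mathcal{A}^{**},\square)$ and $(\mathcal{A}^{**},\Diamond)$ with $\widehat{a}\,\square\,\widehat{b}=\widehat{ab}=\widehat{a}\,\Diamond\,\widehat{b}$, and for $f\in\mathcal{A}^*$ the functional $\widehat{f}\in(\mathcal{A}^{**})^*$ given by $\widehat{f}(F)=F(f)$ satisfies $\widehat{f}(\widehat{a})=f(a)$. Its continuity is immediate from Lemma \ref{plllqlllfunde}(iv), which also shows that if $\ell$ is the least index with $q_{M_\ell}(f)<\infty$, then $q_{\widetilde{M}_\ell}(\widehat{f})\le q_{M_\ell}(f)<\infty$, where $\widetilde{M}_\ell=\{F:q_\ell(F)<1\}$; hence $U_{\widehat{f}}\supseteq\widetilde{M}_\ell$. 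Since $q_\ell(\widehat{a})\le p_\ell(a)$ exactly as in Lemma \ref{plllqlllfunde}(iii), the embedding carries $U_f=M_\ell$ into $\widetilde{M}_\ell\subseteq U_{\widehat{f}}$, which is the bookkeeping that makes the transfer below legitimate.

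Now assume (i). Applying Definition \ref{7120final} to the Fr\'echet algebra $\mathcal{A}^{**}$ (Theorem \ref{aaaccclllfrechet}) gives $WAP(\mathcal{A}^{**})=(\mathcal{A}^{**})^*$, so in particular $\widehat{f}\in WAP(\mathcal{A}^{**})$ for every $f\in\mathcal{A}^*$. Fix distinct sequences $(a_m),(b_n)$ in $U_f$ for which both repeated limits of $f(a_mb_n)$ exist; their images $(\widehat{a_m}),(\widehat{b_n})$ are distinct sequences in $U_{\widehat{f}}$ with $\widehat{f}(\widehat{a_m}\,\square\,\widehat{b_n})=f(a_mb_n)$, so the double-limit criterion of Theorem \ref{T1}(v), applied to $\widehat{f}$ in $\mathcal{A}^{**}$, forces the two repeated limits of $f(a_mb_n)$ to agree. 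Thus the criterion of Theorem \ref{T1}(v) holds for $f$ on $U_f$, whence $f\in WAP(\mathcal{A})$; as $f$ was arbitrary, $WAP(\mathcal{A})=\mathcal{A}^*$ and $\mathcal{A}$ is Arens regular. This is fact (A), and it is the main obstacle precisely because it is where the double-limit transfer and the inclusion $U_f\hookrightarrow U_{\widehat{f}}$ must be verified rather than merely asserted.

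For (B) and the assembly, note that when $\mathcal{A}$ is Arens regular $WAP(\mathcal{A})=\mathcal{A}^*$ as topological vector spaces, so $WAP(\mathcal{A})^*=\mathcal{A}^{**}$ and $WAP(\mathcal{A})^{**}=(\mathcal{A}^{**})^*$; comparing the defining formulas for $\ast,\circ$ with those for $\square,\Diamond$ shows $\ast=\square$ and $\circ=\Diamond$ under this identification, so ``$WAP(\mathcal{A})^*$ is Arens regular'' means exactly ``$\mathcal{A}^{**}$ is Arens regular.'' Hence (i)$\Rightarrow$(ii): by (A), (i) makes $\mathcal{A}$ Arens regular, and then $WAP(\mathcal{A})^*=\mathcal{A}^{**}$ is Arens regular by (i); conversely (ii)$\Rightarrow$(i) because (ii) gives $\mathcal{A}$ Arens regular, whence $WAP(\mathcal{A})^*=\mathcal{A}^{**}$, and the assumed Arens regularity of $WAP(\mathcal{A})^*$ is (i). For (iii), Arens regularity of $\mathcal{A}$ gives $WAP(\mathcal{A})^{**}=(\mathcal{A}^{**})^*$, so the condition $WAP(\mathcal{A})^{**}=WAP(\mathcal{A}^{**})$ says $WAP(\mathcal{A}^{**})=(\mathcal{A}^{**})^*$, i.e.\ $\mathcal{A}^{**}$ is Arens regular; together with (A) this yields (i)$\Leftrightarrow$(iii) and completes the proof.
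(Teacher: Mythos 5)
Your proof is correct and takes essentially the same route as the paper: the paper's argument likewise rests on the descent of Arens regularity from $\mathcal{A}^{**}$ to $\mathcal{A}$ (which it simply asserts) followed by the chain of identifications $WAP(WAP(\mathcal{A})^*)=WAP(\mathcal{A}^{**})=\mathcal{A}^{***}=WAP(\mathcal{A})^{**}$, exactly your facts (A) and (B). The only difference is that your step (A), carried out via the double-limit criterion of Theorem \ref{T1}(v) and the inclusion $\widehat{U_f}\subseteq U_{\widehat{f}}$, supplies the justification for the descent that the paper leaves implicit.
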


\begin{proof}
If $\mathcal{A}^{**}$ is Arens regular, then $\mathcal{A}$ is Arens regular and $WAP(\mathcal{A})=\mathcal{A}^*$. For this reason, we have
$$
WAP(WAP(\mathcal{A})^*)=WAP(\mathcal{A}^{**})=\mathcal{A}^{***}=WAP(\mathcal{A})^{**}.
$$
Similarly, the converse holds.
\end{proof}

\begin{remark}
Pym gives an example of an Arens regular Banach algebra $\mathcal{A}$ such that $\mathcal{A}^{**}$ is not Arens regular \cite{Pym1}. 
\end{remark}

In \cite{Rejali}, Rejali and Vishki showed that for a locally compact (topological) group $G$ with a weight function $w$ on it, the following statements are equivalent:
\begin{enumerate}
\item[(i)]
$L^1(G,w)^{**}$ is Arens regular;
\item[(ii)]
$L^1(G,w)$ is Arens regular;
\item[(iii)]
$L^1(G)$ is Arens regular or $\Omega:G\times G\rightarrow (0,1]$, defined by
$$
\Omega(x,y)=\frac{w(xy)}{w(x)w(y)}\;\;\;\;\;(x,y\in G),
$$
is $0$-cluster, that is 
$$\lim_m\lim_n\Omega(x_m,y_n)=0=\lim_n\lim_m\Omega(x_m,y_n),$$
for sequences $(x_m)$ and $(y_n)$ in $G$ with distinct elements.
\end{enumerate}
Moreover, Baker and Rejali showed that $\ell^1(S,w)$ is Arens regular, whenever $\ell^1(S)$ is  \cite{Baker}. 
Now, the following question is natural.

\begin{question}
Let $S$ be a semigroup and $w$ be a weight function on $S$. Are the following statements equivalent?
\begin{enumerate}
\item[(i)]
$\ell^1(S,w)^{**}$ is Arens regular.
\item[(ii)]
$\ell^1(S,w)$ is Arens regular.
\item[(iii)]
$\ell^1(S)$ is Arens regular or $\Omega$ is $0$-cluster.
\end{enumerate}
\end{question}

Later, Rejali et al. showed that if the set of all idempotents of an inverse semigroup $S$ is finite, then $\ell^1(S)$ is Arens regular if and only if $S$ is finite; see \cite[Theorem 3.8]{Abtahi}. Also in this case, if $S$ admits a bounded below weight for which $\Omega$ is $0$-cluster, then $S$ is countable; see \cite[Theorem 3.4]{Khodsiani}. By arguments above, another question arises.

\begin{question}
Let $S$ be an inverse semigroup with finitely many idempotents and $w$ be a weight function on $S$. Are the following statements equivalent?
\begin{enumerate}
\item[(i)]
$\ell^1(S,w)^{**}$ is Arens regular.
\item[(ii)]
$\ell^1(S,w)$ is Arens regular.
\item[(iii)]
$S$ is finite or $\Omega$ is $0$-cluster.
\end{enumerate}
\end{question}

\begin{remark}
In \cite[Example 3.9]{Abtahi}, the authors showed that there exists an inverse semigroup $S$ with infinitely many idempotents such that $\ell^1(S)$ is  Arens regular. 
\end{remark}

\section{\bf Product of Arens regualr Fr\'echet algebras}\label{section4}

Throughout this section, $(\mathcal{A}_n,p^n_{\ell})$ is a countable sequence of Fr\'echet algebras. We define
\begin{eqnarray*}
&&\ell^1\text{-}\prod_{n\in\mathbb{N}}\mathcal{A}_n=\big{\{}(a_n)\in\prod_{n\in\mathbb{N}} \mathcal{A}_n: \sum_{\ell,n=1}^{\infty}p^n_{\ell}(a_n)<\infty\big{\}}\;\;\;\;\;\text{and} \\
&&\ell^{\infty}\text{-}\prod_{n\in\mathbb{N}}\mathcal{A}_n=\big{\{}(a_n)\in\prod_{n\in\mathbb{N}} \mathcal{A}_n:\sup_{\ell,n\in\mathbb{N}}p^n_{\ell}(a_n)<\infty\big{\}}.
\end{eqnarray*}
Moreover, $c_0\text{-}\prod_{n\in\mathbb{N}}\mathcal{A}_n$ denotes the closed subalgebra of $\ell^{\infty}\text{-}\prod_{n\in\mathbb{N}}\mathcal{A}_n$ consisting of all elements vanishing at infinity and $c_{00}\text{-}\prod_{n\in\mathbb{N}}\mathcal{A}_n$ denotes the closed subalgebra of $\ell^1\text{-}\prod_{n\in\mathbb{N}}\mathcal{A}_n$  consisting of all elements $(a_n)$ such that $a_n\not=0$ only for finitely many $n\in\mathbb{N}$. In the following theorem, we show that $\ell^1\text{-}\prod_{n\in\mathbb{N}}\mathcal{A}_n$ and $\ell^{\infty}\text{-}\prod_{n\in\mathbb{N}}\mathcal{A}_n$ are Fr\'echet algebras. Thus, by \cite[Proposition 25.3]{Meise}, $c_0\text{-}\prod_{n\in\mathbb{N}}\mathcal{A}_n$ and $c_{00}\text{-}\prod_{n\in\mathbb{N}}\mathcal{A}_n$ are Fr\'echet algebras.

\begin{theorem}\label{T1S3T11}
The following statements hold.
\begin{enumerate}
\item[(i)]
$\ell^1\text{-}\prod_{n\in\mathbb{N}}\mathcal{A}_n$ is a Fr\'echet algebra under the pointwise product and the fundamental system of seminorms $(r_{\ell})_{\ell\in\mathbb{N}}$, defined by 
$$r_{\ell}((a_n))=\sum_{n=1}^{\infty}p^n_{\ell}(a_n)\;\;\;\;\;\;\;\;\big{(}\ell\in\mathbb{N},\;(a_n)\in \ell^1\text{-}\prod_{n\in\mathbb{N}}\mathcal{A}_n\big{)}.$$
\item[(ii)]
$\ell^{\infty}\text{-}\prod_{n\in\mathbb{N}}\mathcal{A}_n$ is a Fr\'echet algebra under the pointwise product and the fundamental system of seminorms $(\gamma_{\ell})_{\ell\in\mathbb{N}}$, defined by 
$$\gamma_{\ell}((a_n))=\sup_{n\in\mathbb{N}}p^n_{\ell}(a_n)\;\;\;\;\;\;\;\;\big{(}\ell\in\mathbb{N},\;(a_n)\in \ell^{\infty}\text{-}\prod_{n\in\mathbb{N}}\mathcal{A}_n\big{)}.$$
\end{enumerate}
\end{theorem}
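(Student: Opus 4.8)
The plan is to treat (i) and (ii) in parallel, since both reduce to checking that the proposed seminorms form a countable, increasing, submultiplicative, separating family and that the resulting metrizable topology is complete; once this is in place, separate continuity of the coordinatewise multiplication is automatic from submultiplicativity, so each space is a Fr\'echet algebra in the sense of Section~\ref{S0S}. First I would record that $r_{\ell}$ (resp. $\gamma_{\ell}$) is finite on the corresponding space by its very definition, and that the family is increasing: since each $(p^n_{\ell})_{\ell}$ is an increasing system, $p^n_{\ell}(a_n)\le p^n_{\ell+1}(a_n)$ for every $n$, and taking the sum (resp. supremum) over $n$ gives $r_{\ell}\le r_{\ell+1}$ (resp. $\gamma_{\ell}\le\gamma_{\ell+1}$). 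Separation of points is equally quick: if $r_{\ell}((a_n))=0$ (resp. $\gamma_{\ell}((a_n))=0$) for all $\ell$, then $p^n_{\ell}(a_n)=0$ for all $n,\ell$, whence $a_n=0$ for every $n$ by the Hausdorff property of each $\mathcal{A}_n$.

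For submultiplicativity I would combine the coordinatewise submultiplicativity of the $p^n_{\ell}$ with an elementary inequality on nonnegative families. In the $\ell^1$ case,
$$
r_{\ell}\big((a_nb_n)\big)=\sum_n p^n_{\ell}(a_nb_n)\le\sum_n p^n_{\ell}(a_n)\,p^n_{\ell}(b_n)\le\Big(\sum_n p^n_{\ell}(a_n)\Big)\Big(\sum_n p^n_{\ell}(b_n)\Big)=r_{\ell}((a_n))\,r_{\ell}((b_n)),
$$
where the last step is $\sum_n x_ny_n\le(\sum_n x_n)(\sum_n y_n)$ for $x_n,y_n\ge 0$. In the $\ell^{\infty}$ case the same computation with $\sup$ in place of $\sum$ uses $\sup_n x_ny_n\le(\sup_n x_n)(\sup_n y_n)$. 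This shows that $(r_{\ell})$ and $(\gamma_{\ell})$ are submultiplicative and, in particular, that the coordinatewise products stay in the respective spaces.

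The one genuinely technical point is completeness, which I expect to be the main obstacle; I would model the argument on the proof of \cite[Proposition 25.9]{Meise} already invoked in Theorem~\ref{aaaccclllfrechet}. Given a Cauchy sequence $x^{(k)}=(a^{(k)}_n)_n$, the estimate $p^n_{\ell}(a^{(j)}_n-a^{(k)}_n)\le r_{\ell}(x^{(j)}-x^{(k)})$ (resp. $\le\gamma_{\ell}(x^{(j)}-x^{(k)})$) shows that each coordinate sequence $(a^{(k)}_n)_k$ is Cauchy in the Fr\'echet space $\mathcal{A}_n$, hence converges to some $a_n\in\mathcal{A}_n$; set $x:=(a_n)$. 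To see that $x$ lies in the space and $x^{(k)}\to x$, fix $\ell$ and $\varepsilon>0$, choose $K$ with $r_{\ell}(x^{(j)}-x^{(k)})<\varepsilon$ for $j,k\ge K$, and for each finite $N$ pass to the limit $k\to\infty$ in the truncated sum $\sum_{n=1}^{N}p^n_{\ell}(a^{(j)}_n-a^{(k)}_n)$ using continuity of the $p^n_{\ell}$; letting $N\to\infty$ then yields $r_{\ell}(x^{(j)}-x)\le\varepsilon$ for $j\ge K$. Thus $x^{(j)}-x$ belongs to the space, so $x=x^{(j)}-(x^{(j)}-x)$ does too, and $x^{(k)}\to x$ in every seminorm. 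The $\ell^{\infty}$ case is identical with $\sup_n$ replacing the truncated partial sums. Completeness together with the countable increasing submultiplicative separating families then gives that both products are Fr\'echet algebras.
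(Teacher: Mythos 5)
Your proposal is correct and follows essentially the same route as the paper: verify that the seminorms are submultiplicative and form a fundamental system, then prove completeness by showing each coordinate sequence is Cauchy in $\mathcal{A}_n$ and passing to the limit in the (truncated) sums. The paper writes out only part (i) and dismisses submultiplicativity with ``easily, one can show''; your version supplies those details and is in fact more careful than the paper at the step where the limit is taken inside the infinite sum, but the underlying argument is the same.
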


\begin{proof}
We only prove (i). 
Easily, one can show that the seminorms $(r_{\ell})_{\ell\in\mathbb{N}}$ are submultiplicative
and have the properties of \cite[Lemma 22.4]{Meise}. 
It is enough to show that $\ell^1\text{-}\prod_{n\in\mathbb{N}}\mathcal{A}_n$ is complete. Let $(a_m)$ be a Cauchy sequence in $\ell^1\text{-}\prod_{n\in\mathbb{N}}\mathcal{A}_n$ and $a_m=(x^m_k)_k$, for each $m\in\mathbb{N}$. Thus, given $\varepsilon>0$ and $\ell\in\mathbb{N}$, there exists $N_{\ell}>0$ such that 
$r_{\ell}(a_m-a_n)<\varepsilon$ ($m,n\geq N_{\ell}$).
For this reason, we have
\begin{equation}\label{infty mmm}
\sum_{k=1}^{\infty}p_{\ell}^k(x^m_k-x^n_k)<\varepsilon,
\end{equation}
and $p_{\ell}^k(x^m_k-x^n_k)<\varepsilon$, for $m,n\geq N_{\ell}$. Therefore, $(x^m_k)_k$ is a Cauchy sequence in $\mathcal{A}_k$ and there exists $x_k\in\mathcal{A}_k$ such that $x^m_k\longrightarrow_{m} x_k$. Let $a:=(x_k)$. By using (\ref{infty mmm}), we have $\sum_{k=1}^{\infty}p_{\ell}^k(x^m_k-x_k)<\varepsilon$ and so $r_{\ell}(a_m-a)\longrightarrow 0$. Also, for every $\varepsilon>0$ and $\ell\in\mathbb{N}$ there exists $N_{\ell}>0$ such that
$\sum_{k=1}^{\infty}p_{\ell}^k(x^n_k-x_k)<\varepsilon$ ($n\geq N_{\ell}$). Thus,
$$
\sum_{k=1}^{\infty}p_{\ell}^k(x_k)\leq \sum_{k=1}^{\infty}p_{\ell}^k(x^{N_{\ell}}_k-x_k)+\sum_{k=1}^{\infty}p_{\ell}^k(x^{N_{\ell}}_k)<\varepsilon+r_{\ell}(x^{N_{\ell}}_k)<\infty,
$$
and $a\in\ell^1\text{-}\prod_{n\in\mathbb{N}}\mathcal{A}_n$.
\end{proof}

Now, consider
\begin{eqnarray*}
&&\ell^1\text{-}\prod_{n\in\mathbb{N}}\mathcal{A}^{**}_n=\big{\{}(F_n)\in\prod_{n\in\mathbb{N}}\mathcal{A}^{**}_n: \sum_{\ell,n=1}^{\infty}q^n_{\ell}(F_n)<\infty\big{\}}\;\;\;\;\;\text{and} \\
&&\ell^{\infty}\text{-}\prod_{n\in\mathbb{N}}\mathcal{A}^{**}_n=\big{\{}(F_n)\in\prod_{n\in\mathbb{N}}\mathcal{A}^{**}_n: \sup_{\ell,n\in\mathbb{N}} q^n_{\ell}(F_n)<\infty\big{\}},
\end{eqnarray*}
where 
$$
q^n_{\ell}(F_n)=q_{M^{\circ}_{(n,\ell)}}(F_n)=\sup \big{\{}|F_n(f)|:q_{M_{(n,\ell)}}(f)\leq 1,\;\text{for every $f\in\mathcal{A}^*_n$}\big{\}},
$$
for each $n,\ell\in\mathbb{N}$. By using Theorem \ref{aaaccclllfrechet}, the following result is obvious. 

\begin{corollary}
The submultiplicative seminorms 
$\sum_{n=1}^{\infty}q^n_{\ell}(\cdot)$ and $\sup_{n\in\mathbb{N}} q^n_{\ell}(\cdot)$ $(\ell\in\mathbb{N})$
introduce a fundamental system on 
$\ell^1\text{-}\prod_{n\in\mathbb{N}}\mathcal{A}^{**}_n$ and $\ell^{\infty}\text{-}\prod_{n\in\mathbb{N}}\mathcal{A}^{**}_n$, respectively.
Then $\ell^1\text{-}\prod_{n\in\mathbb{N}}\mathcal{A}^{**}_n$ and $\ell^{\infty}\text{-}\prod_{n\in\mathbb{N}}\mathcal{A}^{**}_n$ are Fr\'echet algebras.
\end{corollary}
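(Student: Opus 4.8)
The plan is to obtain the corollary as a formal consequence of the two results already established, namely Theorem \ref{aaaccclllfrechet} and Theorem \ref{T1S3T11}, applied in succession.

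First I would apply Theorem \ref{aaaccclllfrechet} factorwise: since each $(\mathcal{A}_n,p^n_{\ell})$ is a Fr\'echet algebra, its bidual $(\mathcal{A}_n^{**},q^n_{\ell})$ is again a Fr\'echet algebra, the fundamental system of submultiplicative seminorms $(q^n_{\ell})_{\ell\in\mathbb{N}}$ being the one furnished by Lemma \ref{fundamentalllll}. I would also record that the $q^n_{\ell}$ increase in $\ell$: from $p^n_{\ell}\leq p^n_{\ell+1}$ one gets $M_{(n,\ell+1)}\subseteq M_{(n,\ell)}$, hence $M^{\circ}_{(n,\ell)}\subseteq M^{\circ}_{(n,\ell+1)}$ for the polars, and therefore $q^n_{\ell}(F)=\sup_{f\in M^{\circ}_{(n,\ell)}}|F(f)|\leq q^n_{\ell+1}(F)$ for every $F\in\mathcal{A}_n^{**}$. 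Thus $(\mathcal{A}_n^{**},q^n_{\ell})_{n\in\mathbb{N}}$ is a countable sequence of Fr\'echet algebras in the precise sense required by the next step.

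With this in hand I would simply invoke Theorem \ref{T1S3T11} for the sequence $(\mathcal{A}_n^{**},q^n_{\ell})_{n\in\mathbb{N}}$ in place of $(\mathcal{A}_n,p^n_{\ell})_{n\in\mathbb{N}}$. Part (i) then gives that $\ell^1\text{-}\prod_{n\in\mathbb{N}}\mathcal{A}_n^{**}$ is a Fr\'echet algebra under the pointwise product, with fundamental system of submultiplicative seminorms $(F_n)\mapsto\sum_{n=1}^{\infty}q^n_{\ell}(F_n)$, and part (ii) gives that $\ell^{\infty}\text{-}\prod_{n\in\mathbb{N}}\mathcal{A}_n^{**}$ is a Fr\'echet algebra with fundamental system $(F_n)\mapsto\sup_{n\in\mathbb{N}}q^n_{\ell}(F_n)$. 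These are exactly the two seminorm systems named in the statement, so both assertions of the corollary follow at once.

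The only thing genuinely in need of checking --- and hence the sole potential obstacle --- is that the hypotheses of Theorem \ref{T1S3T11} really are met by the biduals, i.e.\ that for each fixed $n$ the family $(q^n_{\ell})_{\ell}$ is an increasing, submultiplicative, countable fundamental system of seminorms. All three properties are already delivered by Theorem \ref{aaaccclllfrechet} and Lemma \ref{fundamentalllll} (submultiplicativity and the fundamental-system property) together with the short polar computation above (monotonicity), so no new estimate is required and the corollary is indeed immediate, as the authors assert.
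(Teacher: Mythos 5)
Your proposal is correct and follows exactly the route the paper intends: the paper offers no written proof beyond the remark that the result follows from Theorem \ref{aaaccclllfrechet}, and your argument simply makes explicit the two ingredients involved, namely that each $(\mathcal{A}_n^{**},q^n_{\ell})$ is a Fr\'echet algebra by Theorem \ref{aaaccclllfrechet} and Lemma \ref{fundamentalllll}, and that Theorem \ref{T1S3T11} applied to the sequence $(\mathcal{A}_n^{**},q^n_{\ell})$ yields both product algebras with the stated seminorm systems. Your additional check that the $q^n_{\ell}$ increase in $\ell$ (via the reversed inclusion of polars) is a worthwhile detail the paper leaves implicit, and it is carried out correctly.
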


Consider
{\small$$
\ell^1\text{-}\prod_{n\in\mathbb{N}}\mathcal{A}^*_n=\big{\{}(f_n)\in\prod_{n\in\mathbb{N}}\mathcal{A}^*_n: \sum_{n=1}^{\infty}q_{M_{(n,\ell_n)}}(f_n)<\infty,\;\text{for some sequence $(\ell_n)$ in $\mathbb{N}$}\big{\}}
$$}
and
{\small$$
\ell^{\infty}\text{-}\prod_{n\in\mathbb{N}}\mathcal{A}^*_n=\big{\{}(f_n)\in\prod_{n\in\mathbb{N}}\mathcal{A}^*_n:\sup_{n\in\mathbb{N}}q_{M_{(n,\ell_n)}}(f_n)<\infty,\;\text{for some sequence $(\ell_n)$ in $\mathbb{N}$}\big{\}},
$$}
where by Lemma \ref{plllqlllfunde} (ii), $q_{M_{(n,\ell_n)}}(f_n)=\sup_{x\in M_{(n,\ell_n)}}|f_n(x)|<\infty$.
Now, we can conclude the following proposition.

\begin{proposition}\label{BBperp}
For $\mathcal{A}=\ell^1\text{-}\prod_{n\in\mathbb{N}}\mathcal{A}_n$, the following statements hold.
\begin{enumerate}
\item[(i)]
$\mathcal{A}^*\cong\ell^{\infty}\text{-}\prod_{n\in\mathbb{N}}\mathcal{A}^*_n$.
\item[(ii)]
$\mathcal{A}^{**}\cong B^*\oplus B^{\perp}$, where
$$
B=\big{\{}(f_n)\in\prod_{n\in\mathbb{N}} \mathcal{A}^*_n:q_{M_{(n,\ell_n)}}(f_n)\longrightarrow 0,\;\text{for some sequence $(\ell_n)$ in $\mathbb{N}$}\big{\}}
$$
and 
$$
B^{\perp}=\big{\{}F\in\mathcal{A}^{**}:F(B)=\{0\}\big{\}}.
$$
\end{enumerate}
\end{proposition}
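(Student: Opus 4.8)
The plan is to read both parts as duality computations for countable $\ell^1$-sums, transporting to the Fr\'echet setting the classical identifications $(\ell^1)^*\cong\ell^\infty$ and $(c_0)^*\cong\ell^1$; the seminorm estimates of Lemma \ref{plllqlllfunde} will do all the analytic work, while Theorem \ref{T1S3T11} and the Corollary above supply the ambient Fr\'echet-algebra structures.

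For (i) I would set up the canonical pairing $\theta:\ell^\infty\text{-}\prod_n\mathcal{A}_n^*\to\mathcal{A}^*$, $\theta((f_n))((a_n))=\sum_n f_n(a_n)$, with candidate inverse $\Phi\mapsto(\Phi\circ\iota_n)_n$, where $\iota_n:\mathcal{A}_n\hookrightarrow\mathcal{A}$ is insertion in the $n$-th coordinate. The steps are: (a) absolute convergence and continuity of $\theta((f_n))$ --- by Lemma \ref{plllqlllfunde}(iii), $|f_n(a_n)|\le q_{M_{(n,\ell)}}(f_n)\,p_\ell^n(a_n)$, so $\sum_n|f_n(a_n)|\le\big(\sup_n q_{M_{(n,\ell)}}(f_n)\big)\,r_\ell((a_n))$; (b) for the inverse, Lemma \ref{plllqlllfunde}(i) applied to $(\mathcal{A},r_\ell)$ yields a single index $L$ and $\lambda>0$ with $|\Phi((a_n))|\le\lambda\,r_L((a_n))$, and restricting to $\iota_n$ gives $q_{M_{(n,L)}}(\Phi\circ\iota_n)\le\lambda$ uniformly in $n$, so the coordinate data lands in the $\ell^\infty$-product; (c) injectivity, since $c_{00}\text{-}\prod_n\mathcal{A}_n$ is dense in $\mathcal{A}$ and a functional is determined by its coordinate restrictions. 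Comparing the two seminorm systems then upgrades the algebraic bijection to a topological isomorphism.

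For (ii) I would note first that, under the identification of (i), $B$ is a closed subspace of $\mathcal{A}^*$ --- it is the $c_0$-analogue of the $\ell^\infty$-product --- so restriction $\rho:\mathcal{A}^{**}\to B^*$, $F\mapsto F|_B$, sits in the exact sequence $0\to B^\perp\to\mathcal{A}^{**}\stackrel{\rho}{\longrightarrow}B^*\to 0$, where $\rho$ is onto by the Hahn--Banach extension theorem for locally convex spaces and $\ker\rho=B^\perp$ by definition. The whole content is to split this sequence. I would identify $B^*\cong\ell^1\text{-}\prod_n\mathcal{A}_n^{**}$ by the computation of (i) applied to the $c_0$-sum, and then define a continuous section $S:B^*\to\mathcal{A}^{**}$ as the natural embedding $(F_n)\mapsto\big[(f_n)\mapsto\sum_n F_n(f_n)\big]$ of $\ell^1\text{-}\prod_n\mathcal{A}_n^{**}$ into $\mathcal{A}^{**}$. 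Checking $\rho\circ S=\mathrm{id}_{B^*}$ and $S(B^*)\cap B^\perp=\{0\}$, every $F\in\mathcal{A}^{**}$ decomposes as $F=S(\rho F)+(F-S(\rho F))$ with the second term in $B^\perp$, giving $\mathcal{A}^{**}\cong B^*\oplus B^\perp$.

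The main obstacle is exactly this splitting: producing $S$ as a genuinely continuous linear right inverse rather than the merely pointwise extensions that Hahn--Banach provides. This forces both the duality identification $B^*\cong\ell^1\text{-}\prod_n\mathcal{A}_n^{**}$ and the convergence and continuity of the pairing $\sum_n F_n(f_n)$, for which the decisive estimate is Lemma \ref{plllqlllfunde}(iv), $|F_n(f_n)|\le q_\ell^n(F_n)\,q_{M_{(n,\ell)}}(f_n)$, pairing the $\ell^1$-summable weights $q_\ell^n(F_n)$ against the $\ell^\infty$-bounded weights $q_{M_{(n,\ell)}}(f_n)$. A secondary point demanding care throughout, already visible in step (b) of (i), is the index bookkeeping: continuity of a functional on an $\ell^1$-sum forces one seminorm index $L$ to serve uniformly across all coordinates, and one must track such uniform indices when passing between $\mathcal{A}^*$, $B$, and their duals.
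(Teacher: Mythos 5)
Your proposal follows essentially the same route as the paper: the same canonical pairings $\theta((f_n))((a_n))=\sum_n f_n(a_n)$ and $(F_n)\mapsto\big[(f_n)\mapsto\sum_n F_n(f_n)\big]$, with absolute convergence supplied by Lemma~\ref{plllqlllfunde}(iii) and (iv), and the same identification $B^*\cong\ell^1\text{-}\prod_{n}\mathcal{A}^{**}_n$. If anything you are more explicit than the paper, which leaves the final decomposition $\mathcal{A}^{**}\cong B^*\oplus B^{\perp}$ to ``similar arguments'' while you spell out the exact sequence, the continuous section, and the uniform-index bookkeeping.
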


\begin{proof}
(i). Define
$\phi:\ell^{\infty}\text{-}\prod_{n\in\mathbb{N}}\mathcal{A}^*_n\rightarrow \mathcal{A}^*$ by $\phi((f_n))((a_n)):=\sum_{n=1}^{\infty}f_n(a_n)$,
for all $(f_n)\in\ell^{\infty}\text{-}\prod_{n\in\mathbb{N}}\mathcal{A}^*_n$ and $(a_n)\in\mathcal{A}$. Suppose that $(f_n)\in\ell^{\infty}\text{-}\prod_{n\in\mathbb{N}}\mathcal{A}^*_n$. Then, $K:=\sup_{n\in\mathbb{N}} q_{M_{(n,\ell_n)}}(f_n)<\infty$ for some sequence $(\ell_n)$. By applying Lemma \ref{plllqlllfunde} (iii), we have
$$
\sum_{n=1}^{\infty}|f_n(a_n)|\leq \sum_{n=1}^{\infty}q_{M_{(n,\ell_n)}}(f_n)p^n_{\ell_n}(a_n)\leq K\sum_{n=1}^{\infty}p^n_{\ell_n}(a_n)<\infty.
$$
For $f\in\mathcal{A}^*$, let $f_n:=f\circ j_n$, where
$j_n:\mathcal{A}_n\rightarrow\mathcal{A}$ ($n\in\mathbb{N}$) is the canonical map.
Clearly, $f$ and $j_n$ are linear and continuous and consequently $f_n\in \mathcal{A}^*_n$ ($n\in\mathbb{N}$). For this reason, we have
$$
\sum_{n=1}^{\infty}f_n(a_n)=f(\sum_{n=1}^{\infty}j_n(a_n))=f((a_n))\;\;\;\;\;\;((a_n)\in\mathcal{A})
$$
and so $\phi$ is onto. Also, if $\phi((f_n))=\phi((g_n))$ for $(f_n), (g_n)\in \ell^{\infty}\text{-}\prod_{n\in\mathbb{N}}\mathcal{A}^*_n$, then 
$$
\sum_{n=1}^{\infty}f_n(j_m(a_m))=\sum_{n=1}^{\infty}g_n(j_m(a_m))\;\;\;\;\;\;((a_n)\in\mathcal{A}),
$$
and $(f_n)=(g_n)$. To prove $\mathcal{A}^*\cong\ell^{\infty}\text{-}\prod_{n\in\mathbb{N}}\mathcal{A}^*_n$,  let $(f^{\alpha})$ be convergent to some $f$ in $\ell^{\infty}\text{-}\prod_{n\in\mathbb{N}}\mathcal{A}^*_n$, where $f=(f_n)_n$ and
$f^{\alpha}=(f^{\alpha}_n)_n$, for each $\alpha$. 
Thus, we have $\sup_{n\in\mathbb{N}}q_{M_{(n,\ell_n)}}(f^{\alpha}_n-f_n)\rightarrow 0$ and  $q_{M_{(n,\ell_n)}}(f^{\alpha}_n-f_n)\rightarrow 0$, for some sequence $(\ell_n)$ in $\mathbb{N}$.
Consider a bounded subset $M$ of $\mathcal{A}$. There exists $\lambda>0$ such that
$M\subseteq \lambda \prod_{n\in\mathbb{N}}M_{(n,\ell_n)}$. Therefore, $q_M(f^{\alpha}-f)\rightarrow 0$ and $f^{\alpha}\rightarrow f$ in $\mathcal{A}^*$.
Conversely, let $(f^{\alpha})$ be convergent to some $f$ in $\mathcal{A}^*$, where $f=(f_n)_n$ and $f^{\alpha}=(f^{\alpha}_n)_n$, for each $\alpha$. We have
$$
q_{M_{(n,\ell_n)}}(f^{\alpha}_n-f_n)=\sup_{x\in M_{(n,\ell_n)}}|f^{\alpha}_n(x)-f_n(x)|= 
\sup_{a\in j_n(M_{(n,\ell_n)})}|f^{\alpha}(a)-f(a)|, 
$$
for some sequence $(\ell_n)$ in $\mathbb{N}$. Since $(f^{\alpha}-f)$ is bounded on $j_n(M_{(n,\ell_n)})$ ($n\in\mathbb{N}$), by assumption, $f^{\alpha}\rightarrow f$ in $\ell^{\infty}\text{-}\prod_{n\in\mathbb{N}}\mathcal{A}^*_n$.

(ii).
We claim that $B^*\cong \ell^1\text{-}\prod_{n\in\mathbb{N}}\mathcal{A}^{**}_n$. Define $\phi:\ell^1\text{-}\prod_{n\in\mathbb{N}}\mathcal{A}^{**}_n\rightarrow B^*$ by $\phi((F_n))((f_n))=\sum_{n=1}^{\infty}F_n(f_n)$, for every $(F_n)\in\ell^1\text{-}\prod_{n\in\mathbb{N}}\mathcal{A}^{**}_n$ and $(f_n)\in B$. Since $q_{M_{(n,\ell_n)}}(f_n)\rightarrow 0$ and $\sum_{n=1}^{\infty}q^n_{\ell_n}(F_n)<\infty$, for some sequence $(\ell_n)$, we have
$$
\sum_{n=1}^{\infty}|F_n(f_n)|\leq \sum_{n=1}^{\infty}q^n_{\ell_n}(F_n)q_{M_{(n,\ell_n)}}(f_n)<\infty,
$$
and $\phi$ is well-defined. Similar to (i), one can show that $\phi$ is a bijective map. 
Let $(F^{\alpha})$ be convergent to some $F$ in $\ell^1\text{-}\prod_{n\in\mathbb{N}}\mathcal{A}^{**}_n$, where $F=(F_n)_n$ and
$F^{\alpha}=(F^{\alpha}_n)_n$, for each $\alpha$. Then 
$\sum_{n=1}^{\infty}q^n_{\ell}(F^{\alpha}_n-F_n)\rightarrow_{\alpha} 0$ and so 
\begin{eqnarray}\label{BCBCBC1362}
\;\;\;\;\;\;\;\;\;\;\;\;\;\;\;\;\;\;\;\;\;\;\;\;\;\;\;\;\;\;\;\;\;\;\;\;\;\;\;
q^n_{\ell}(F^{\alpha}_n-F_n)=q_{M^{\circ}_{(n,\ell)}}(F^{\alpha}_n-F_n)\rightarrow_{\alpha} 0\;\;\;\;\;\;\;(n,\ell\in\mathbb{N}).
\end{eqnarray}
If $M\subseteq B$ is a bounded set, then there are $(\lambda_n)\subseteq\mathbb{R}^{+}$ and $(\ell_n)\subseteq\mathbb{N}$ such that $M\subseteq \prod_{n\in\mathbb{N}}\lambda_n M^{\circ}_{(n,\ell_n)}$. Thus, by using (\ref{BCBCBC1362}), $q_M(F^{\alpha}-F)\rightarrow_{\alpha} 0$. Due to the open mapping theorem \cite[Theorem 24.30]{Meise}, the converse holds. By similar arguments, one can see that $\mathcal{A}^{**}\cong B^*\oplus B^{\perp}$.
\end{proof}

We now state the main result of this paper which is a generalization of \cite[Theorem 6]{Arikan} for Fr\'echet algebras. 

\begin{theorem}
The Fr\'echet algebra $\ell^1\text{-}\prod_{n\in\mathbb{N}}\mathcal{A}_n$ is Arens regular if and only if each $\mathcal{A}_n$ is Arens regular.
\end{theorem}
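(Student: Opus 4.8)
The plan is to run both implications through the double-limit criterion of Theorem \ref{T1}(v) combined with Definition \ref{7120final}: a Fr\'echet algebra $\mathcal{B}$ is Arens regular exactly when every $f\in\mathcal{B}^*$ is weakly almost periodic, i.e. for all sequences $(x_m),(y_n)$ in $U_f$ the repeated limits $\lim_m\lim_n f(x_my_n)$ and $\lim_n\lim_m f(x_my_n)$ coincide whenever both exist. Write $\mathcal{A}=\ell^1\text{-}\prod_n\mathcal{A}_n$ and use the identification $\mathcal{A}^*\cong\ell^{\infty}\text{-}\prod_n\mathcal{A}_n^*$ of Proposition \ref{BBperp}(i), so that each $f\in\mathcal{A}^*$ is a bounded family $(f_k)$ with $f((a_k))=\sum_k f_k(a_k)$, together with the canonical inclusions $j_n\colon\mathcal{A}_n\to\mathcal{A}$ from the proof of Proposition \ref{BBperp}.

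For the forward implication I would show Arens regularity passes to each factor. Fix $N$ and $g\in\mathcal{A}_N^*$, and let $\tilde g:=g\circ\proj_N\in\mathcal{A}^*$, the family supported in the $N$-th coordinate. A short seminorm computation gives $q_{M_\ell}(\tilde g)=q_{M_{(N,\ell)}}(g)$, so the minimal index defining $U_{\tilde g}$ agrees with that of $U_g$ and $j_N(U_g)\subseteq U_{\tilde g}$. Since $\tilde g(j_N(x)\,j_N(y))=g(xy)$, the double-limit equality for $\tilde g$ (valid because $\mathcal{A}$ is Arens regular) restricts along $j_N$ to the double-limit equality for $g$. Hence $g\in WAP(\mathcal{A}_N)=wap(\mathcal{A}_N)$, and as $g$ was arbitrary, $\mathcal{A}_N$ is Arens regular.

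The reverse implication is the substantial one. Assume each $\mathcal{A}_n$ is Arens regular and fix $f=(f_k)\in\mathcal{A}^*$; each $f_k$ then lies in $WAP(\mathcal{A}_k)=\mathcal{A}_k^*$. Let $\ell_0$ be minimal with $q_{M_{\ell_0}}(f)=:C<\infty$, so $U_f=M_{\ell_0}$. Testing $f$ on the inclusions $j_k$ (note $r_{\ell_0}(j_k(x))=p^k_{\ell_0}(x)$, so $j_k$ carries $p^k_{\ell_0}$-balls into $M_{\ell_0}$) gives the uniform bound $q_{M_{(k,\ell_0)}}(f_k)\le C$ for all $k$, whence by submultiplicativity and Lemma \ref{plllqlllfunde}(iii),
$$
|f_k(a_kb_k)|\le C\,p^k_{\ell_0}(a_k)\,p^k_{\ell_0}(b_k)\qquad(a,b\in U_f).
$$
Given sequences $(a^{(m)}),(b^{(n)})$ in $U_f$ for which both repeated limits of $\Phi(m,n):=\sum_k f_k(a^{(m)}_k b^{(n)}_k)$ exist, I would pass to a diagonal subsequence so that, for every $k$, all of $\lim_m p^k_{\ell_0}(a^{(m)}_k)=:w_k$, $\lim_n p^k_{\ell_0}(b^{(n)}_k)=:v_k$, and the coordinatewise limits $g_k(m)=\lim_n f_k(a^{(m)}_kb^{(n)}_k)$, $h_k(n)=\lim_m f_k(a^{(m)}_kb^{(n)}_k)$ and their outer limits exist; passing to subsequences preserves the assumed repeated limits of $\Phi$. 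Because $a^{(m)}_k,b^{(n)}_k\in U_{f_k}$ and $\mathcal{A}_k$ is Arens regular, Theorem \ref{T1}(v) in $\mathcal{A}_k$ forces $\lim_m\lim_n f_k(a^{(m)}_kb^{(n)}_k)=\lim_n\lim_m f_k(a^{(m)}_kb^{(n)}_k)=:d_k$ (repeated entries in a coordinate do not obstruct this, as the equality holds for any sequences with both limits present).

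The hard part is interchanging the outer limit with the infinite sum over $k$, since a bounded subset of an $\ell^1$-product need not be uniformly summable. The uniform bound above gives, for each fixed $m$, uniform convergence of $\sum_k f_k(a^{(m)}_kb^{(n)}_k)$ in $n$, hence $\lim_n\Phi(m,n)=\sum_k g_k(m)$; the genuinely delicate step is $\lim_m\sum_k g_k(m)=\sum_k d_k$. Here the $\ell^1$-structure rescues the argument: since $\sum_k p^k_{\ell_0}(b^{(n)}_k)<1$ for every $n$, Fatou's lemma gives $\sum_k v_k\le 1$, so $v_k\to0$, while $\sum_k p^k_{\ell_0}(a^{(m)}_k)<1$ uniformly in $m$. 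As $|g_k(m)|\le C\,p^k_{\ell_0}(a^{(m)}_k)\,v_k$, the tails satisfy $\sum_{k>K}|g_k(m)|\le C\sup_{k>K}v_k\to0$ uniformly in $m$, so $\sum_k g_k(m)$ converges uniformly in $m$ and the interchange is legitimate, giving $\lim_m\lim_n\Phi=\sum_k d_k$. The symmetric computation, using $w_k\to0$, gives $\lim_n\lim_m\Phi=\sum_k d_k$ as well. Hence the two repeated limits agree, $f\in WAP(\mathcal{A})=\mathcal{A}^*$, and $\mathcal{A}$ is Arens regular.
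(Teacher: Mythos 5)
Your proof is essentially correct, but it takes a genuinely different route from the paper's. The paper argues at the level of biduals: using the decomposition $\mathcal{A}^{**}\cong B^*\oplus B^{\perp}$ of Proposition \ref{BBperp}(ii), it shows $\mathcal{A}\cdot\mathcal{A}^*\cup\mathcal{A}^*\cdot\mathcal{A}\subseteq B$ and $\mathcal{A}^*\cdot B^*\cup B^*\cdot\mathcal{A}^*\subseteq B$, so that $F\square G=F_1\square G_1$ and $F\Diamond G=F_1\Diamond G_1$ with $F_1,G_1\in B^*\cong\ell^1\text{-}\prod_{n}\mathcal{A}_n^{**}$, where both products are computed coordinatewise; coordinatewise regularity then finishes the argument (Arikan's scheme transplanted to the Fr\'echet setting). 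You instead verify the Grothendieck--Pym double-limit criterion of Theorem \ref{T1}(v) directly for every $f\in\mathcal{A}^*$. What your approach buys is that it never leaves $\mathcal{A}^*$ and avoids the structural identification of $\mathcal{A}^{**}$, at the price of a limit/sum interchange, which you handle correctly: the observations that $\sum_k v_k\le 1$ forces $v_k\to 0$ and that $|g_k(m)|\le C\,p^k_{\ell_0}(a^{(m)}_k)\,v_k$ gives tails $\sum_{k>K}|g_k(m)|\le C\sup_{k>K}v_k$ uniformly in $m$ are exactly what is needed, and your dismissal of the ``distinct elements'' issue in each coordinate is also sound (reduce to either an eventually constant subsequence or a distinct-element subsequence). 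Two points deserve more care in a final write-up: the simultaneous diagonal extraction making all the coordinatewise iterated limits $\lim_m\lim_n f_k(a^{(m)}_kb^{(n)}_k)$ and $\lim_n\lim_m f_k(a^{(m)}_kb^{(n)}_k)$ exist for every $k$ at once (a countable nested-subsequence argument in which one must check that refining the $n$-subsequence does not destroy the limits already secured), and the explicit remark that passing to these subsequences preserves the hypothesised repeated limits of $\Phi$. Your forward direction via $\proj_N$ is in fact more explicit than the paper's one-line appeal to $\mathcal{A}_N$ being a closed subalgebra, since $g\circ\proj_N$ provides a canonical extension of $g\in\mathcal{A}_N^*$ without invoking Hahn--Banach.
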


\begin{proof}
Let $\mathcal{A}=\ell^1\text{-}\prod_{n\in\mathbb{N}}\mathcal{A}_n$ and each $\mathcal{A}_n$ be Arens regular. Consider $a=(a_n)$ in $\mathcal{A}$ and $f=(f_n)$ in $\mathcal{A}^*$. We show that $a\cdot f\in B$, where $B$ is defined as in Proposition \ref{BBperp}. By Lemma \ref{plllqlllfunde} (ii) and (iii), for every $x\in M_{(n,\ell_n)}$ we have
$$
|a_n\cdot f_n(x)|\leq q_{M_{(n,\ell_n)}}(f_n)p^n_{\ell_n}(x)p^n_{\ell_n}(a_n)\;\;\;\;\;\;(n\in\mathbb{N}),
$$
for some sequence $(\ell_n)$ in $\mathbb{N}$. By assumption, $p^n_{\ell_n}(x)\leq 1$ and $p^n_{\ell_n}(a_n)\rightarrow_{n}0$ and so $|a_n\cdot f_n(x)|\rightarrow_{n}0$, for every $x\in M_{(n,\ell_n)}$. Thus, 
$$q_{M_{(n,\ell_n)}}(a_n\cdot f_n)=\sup_{x\in M_{(n,\ell_n)}}|a_n\cdot f_n(x)|\rightarrow_{n}0,$$
and $a\cdot f\in B$. For this reason, $\mathcal{A}\cdot\mathcal{A}^*\subseteq B$ and similarly, $\mathcal{A}^*\cdot \mathcal{A}\subseteq B$. Therefore,
\begin{equation}\label{F11}
\mathcal{A}\cdot\mathcal{A}^*\cup \mathcal{A}^*\cdot \mathcal{A}\subseteq B.
\end{equation}
Let $F=(F_1,F_2)\in\mathcal{A}^{**}$, for some $F_1\in B^*$ and $F_2\in B^{\perp}$. According to the previous arguments, we have
$f\cdot F_2(a)=F_2(a\cdot f)=0$
and so $f\cdot F(a)=f\cdot F_1(a)$, for every $a\in\mathcal{A}$ and $f\in\mathcal{A}^*$.
Now, for $f\in\mathcal{A}^*$ we have $f\cdot F_1=(f_n\cdot F^n_1)\in \mathcal{A}^*\cdot B^*$. Note that $q^n_{\ell}(F^n_1)\rightarrow_n 0$ ($\ell\in\mathbb{N}$). In fact, $\sum_{\ell,n\in\mathbb{N}}q^n_{\ell}(F^n_1)<\infty$ and by Lemma \ref{plllqlllfunde} (ii) and (iv), we have
\begin{eqnarray*}
q_{M_{(n,\ell_n)}}(f_n\cdot F^n_1)&=&\sup_{x\in M_{(n,\ell_n)}}|f_n\cdot F^n_1(x)| 
=\sup_{x\in M_{(n,\ell_n)}}|F^n_1(x\cdot f_n)|  \\
&\leq&\sup_{x\in M_{(n,\ell_n)}}q^n_{\ell}(F^n_1)q_{M_{(n,\ell_n)}}(x\cdot f_n)
\rightarrow_n 0.
\end{eqnarray*}
Hence, $\mathcal{A}^*\cdot B^*\subseteq B$ and similarly, $B^*\cdot \mathcal{A}^*\subseteq B$. Therefore,
\begin{equation}\label{F22}
\mathcal{A}^*\cdot B^*\cup B^*\cdot \mathcal{A}^*\subseteq B.
\end{equation}
Now, for every $F,G\in\mathcal{A}^{**}$, by applying (\ref{F11}) and (\ref{F22}), 
$F\square G= F_1\square G_1$ and $F\Diamond G=F_1\Diamond G_1$,
where $F_1,G_1\in B^*$.
Since $B^*\cong\ell^1\text{-}\prod_{n\in\mathbb{N}}A^{**}_n$, we have $F_1\square G_1:=(F^n_1\square_n G^n_1)$ and $F_1\Diamond G_1:=(F^n_1\Diamond_n G^n_1)$. By Arens regularity of each $\mathcal{A}_n$, we have $F^n_1\square_n G^n_1=F^n_1\Diamond_n G^n_1$ ($n\in\mathbb{N}$), and so $ F_1\square G_1=F_1\Diamond G_1$. Thus, $\mathcal{A}$ is Arens regular.

The converse is obvious, since each $\mathcal{A}_n$ is a closed subalgebra of $\mathcal{A}$.
\end{proof}

The following example shows that in general $\ell^{\infty}\text{-}\prod_{n\in\mathbb{N}}\mathcal{A}_n$ is not Arens regular.

\begin{example}\label{example2}
Consider Banach algebras $\mathcal{A}_n=\ell^1(\mathbb{Z},w_{\frac{1}{n}})$ $(n\in\mathbb{N})$ where $$w_{\alpha}(t)=(1+|t|)^{\alpha}\;\;\;\;\;\;\;\;\;(t\in\mathbb{Z})$$ for each $\alpha>0$. Let
\begin{eqnarray*}
&&\mathcal{A}=c_0\text{-}\prod_{n\in\mathbb{N}}\mathcal{A}_n=\big{\{}(a_n)\in\prod_{n\in\mathbb{N}}\mathcal{A}_n:\|a_n\|_n\rightarrow0\big{\}}\;\;\;\;\;\text{and} \\
&&\mathcal{B}=\ell^{\infty}\text{-}\prod_{n\in\mathbb{N}}\mathcal{A}_n=\big{\{}(a_n)\in\prod_{n\in\mathbb{N}}\mathcal{A}_n:\sup_{n\in\mathbb{N}}\|a_n\|_n<\infty\big{\}}.
\end{eqnarray*}
The following statements hold.
\begin{enumerate}
\item[(i)]
by \cite[page 35]{Dales}, $\mathcal{A}$ is Arens regular.
\item[(ii)]
by \cite[Example 9.2]{Dales}, $\mathcal{B}$ is not Arens regular.
\item[(iii)]
$\mathcal{B}$ is a closed sualgebra of $\mathcal{A}^{**}$ and consequently $\mathcal{A}^{**}$ is not Arens regular.
\end{enumerate}
\end{example}

\footnotesize

\vspace{9mm}

{\footnotesize \noindent
 Z. Alimohammadi\\
  Department of Mathematics,
   University of Isfahan,
    Isfahan, Iran\\
    z.alimohamadi62@yahoo.com\\

\noindent
 A. Rejali\\
  Department of Mathematics,
   University of Isfahan,
    Isfahan, Iran\\
    rejali@sci.ui.ac.ir\\

\end{document}